\pgfplotsset{compat = newest}
\DeclareMathOperator{\sech}{sech}
\theoremstyle{plain}
\newtheorem{theorem}{Theorem}
\newtheorem{lemma}{Lemma}
\newtheorem{proposition}{Proposition}
\theoremstyle{definition}
\newtheorem{definition}{Definition}
\newtheorem{example}{Example}
\newtheorem{remark}{Remark}
\newtheorem{problem}{Problem}
\title{An Application of Optimal Control Theory to R-Tipping}
\author{Grace Z. Zhang}
\address{University of Minnesota}
\email{zhan5640@umn.edu}
\date{\today}
\begin{document}
	
	\begin{abstract}
	An application of optimal control theory results in a lower bound on the speed $|\dot{\lambda}(t)|$ that must be attained at least once by any external forcing function that induces tipping in the asymptotically autonomous scalar ODE $\dot{x} = f(x+\lambda(t))$. The value of this critical speed depends on the total arclength $\int_{-\infty}^{\infty} |\dot{\lambda}(t)| ~dt$ of forcing, and may be interpreted as a safe threshold rate associated to each given arclength, such that if the speed of forcing remains everywhere slower than this, tipping cannot occur. The bound is tight in the sense that there exists a forcing function (continuous but non-smooth) which induces tipping, possesses the required arclength, and never exceeds the threshold speed. Further, the threshold speed is a strictly decreasing function of arclength, thus capturing the trade off between \textit{how fast} and \textit{how far} of a minimal disturbance characterizes tipping. 
	\end{abstract}

\maketitle

The standard setting for rate-induced \cites{ashwinTippingPointsOpen2012, ashwinParameterShiftsNonautonomous2017, wieczorekRateinducedTippingThresholds2023} involves fixing a particular parameterized family of smooth forcing functions and identifying a critical value of the rate parameter. In contrast, we consider a broad collection of all possible forcing functions, continuous but not necessarily smooth, and seek a general property possessed by those which effect tipping behavior. We focus on rigidly shifting asymptotically autonomous scalar systems $\dot{x}= f (x +\lambda(t))$ and identify a nonsmooth choice of forcing function $\lambda(t)$ which is an optimal tipping strategy in the sense that it utilizes the least possible maximum speed. Under a co-moving change of coordinates, the problem of finding this optimal $\lambda(t)$ becomes dual to the problem of finding an additive control function that achieves basin escape with minimum fuel. We show the optimizer is a bang-bang control. 

We separate the presentation into a scalar special case where the forcing function is assumed to be monotone and the basin of attraction one-sided, followed by the general scalar case with these assumptions removed. 

\section{Introductory Examples}

\subsection{Smooth Prototype}

\begin{example}\label{ex:prototype}A prototypical example of rate-induced tipping, given in \cite{ashwinParameterShiftsNonautonomous2017}, involves a base vector field $\dot{x} = x^2-1$ which is nonautonomously shifted to the left by a smooth ramp function $\lambda(rt)$. A fixed constant $\lambda_{\infty}>2$ defines the total amplitude of the shift, while a variable rate parameter $r>0$ modulates its steepness, with smaller $r$ corresponding to a slower shift and larger $r$ to a faster shift. 
	\[
	\begin{gathered}
		\dot{x} = (x + \lambda)^2 -1\\
		\lambda(rt) = \frac{\lambda_{\infty}}{2}\left(1+\tanh\left(\frac{\lambda_{\infty}rt}{2}\right)\right)
	\end{gathered}
	\]
	
\begin{figure}[H]
	\begin{center}
		\begin{tikzpicture}[]
			\begin{axis}[  axis lines=center,
				xtick={1, -1,-2, -4},
				hide obscured x ticks=false,
				major tick length=15,
				xticklabels={1, -1, 1-$\lambda_{\infty}$, -1-$\lambda_{\infty}$},
				ticklabel style={font=\small, yshift=-1.5ex, color=black, rotate=-40},
				ytick=\empty,
				every tick/.style={black, thick},
				xmax=3,
				xmin=-6,
				ymax=3,
				ymin=-2,
				yscale=0.5,
				xlabel=$x$,
				]
				\addplot[smooth, ultra thick, blue] {x^2-1};
				\addplot[smooth, dotted, thick, blue] {(x+3)^2-1};
			\end{axis}
			
			\draw[->, blue]        (3,2.5)   -- (2,2.5);
			\draw[->, blue]        (3.3,1.7)   -- (2.3,1.7);
			\draw[->, blue]        (3.8,0.9)   -- (2.8,0.9);
		\end{tikzpicture}
	\end{center}
	\vspace{0.1 in}
	\begin{center}
		\begin{tikzpicture}[]
			\begin{axis}[  axis lines=center,
				ticks=none,
				xmax=5,
				xmin=-5,
				ymax=3.5,
				ymin=-0.5,
				yscale=0.5,
				xlabel=$t$,
				]
				\addplot[smooth, ultra thick, red] {3/2*(1+tanh(3*1*x/2))};
				\addplot[dotted, thick, black] {3};
			\end{axis}
			\node[] at (7.5,2.5) {$\lambda_{\infty}$};
			\node[] at (-0.3,0.4) {0};
		\end{tikzpicture}
	\end{center}
		\caption{A prototypical example of rate-induced tipping.}
	\end{figure}
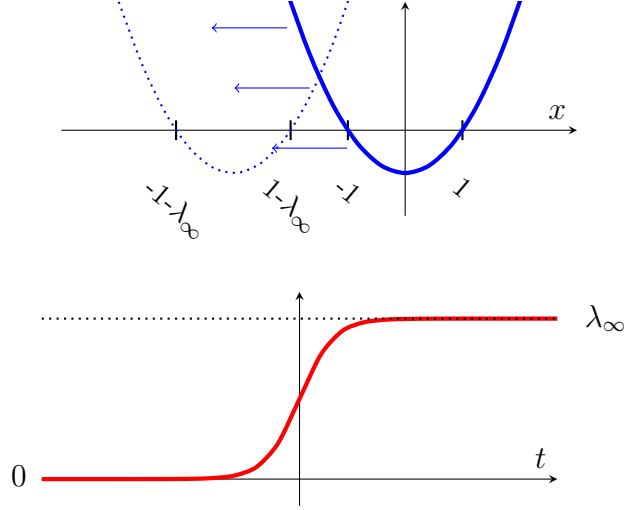
	
	As the vector field translates rigidly leftwards, the attracting equilibrium at $x=-1$ is displaced. Rate-induced tipping concerns itself with whether a trajectory beginning at the original steady state can adapt to this displacement or if it becomes destabilized. 
	
	Here it is known that for each parameter regime $(\lambda_{\infty}, r)$ there exists a unique solution $\hat{x}(t)$ to the ODE such that $\lim\limits_{t\to - \infty}\hat{x}(t) = -1$. Further, fixing $\lambda_{\infty}$, there exists a critical value $r=r_{c}(\lambda_{\infty})$ such that
	$$\begin{cases}
		\lim\limits_{t\to \infty}\hat{x}(t) = -1-\lambda_{\infty} & \text{for } r < r_c\\
		\lim\limits_{t\to \infty}\hat{x}(t) = 1-\lambda_{\infty}& \text{for } r=r_c\\
		\hat{x}(t) \to \infty \text{ (in finite time) } & \text{for } r > r_c
	\end{cases},
	$$
		and an exact expression for this critical value  \cite{perrymanHowFastToo2015} is known to be
	$$r_{c} = \frac{4}{\lambda_{\infty}(\lambda_{\infty}-2)}.$$
	
	Intuitively, a sufficiently slow shift allows the trajectory to seamlessly "track" the moving attractor. But a too-fast shift destabilizes it onto the other side of the moving repeller. There is a critical rate in between where the trajectory ends up balanced precisely on the basin boundary. 

\end{example}

\subsection{Piecewise Linear Prototype}

Most literature on rate-induced tipping customarily assumes smoothness of the ramping function; however, the next instance of a nonsmooth, piecewise linear ramp will be of core importance to this work.

\begin{example}
	Replace the smooth ramping function in Example \ref{ex:prototype} with the following piecewise linear ramping function, where the slope of the increasing portion is $m>0$, and as before $\lambda_{\infty} >2$. 
	
	\[
	\begin{gathered}
		\dot{x} = (x + \lambda)^2 -1\\
		\lambda(mt) = \begin{cases}
			0 & \text{if } t < 0\\
			mt & \text{if } 0 \leq t \leq \lambda_{\infty}/m\\
			0 & \text{if } t > \lambda_{\infty}/m
		\end{cases}
	\end{gathered}
	\]

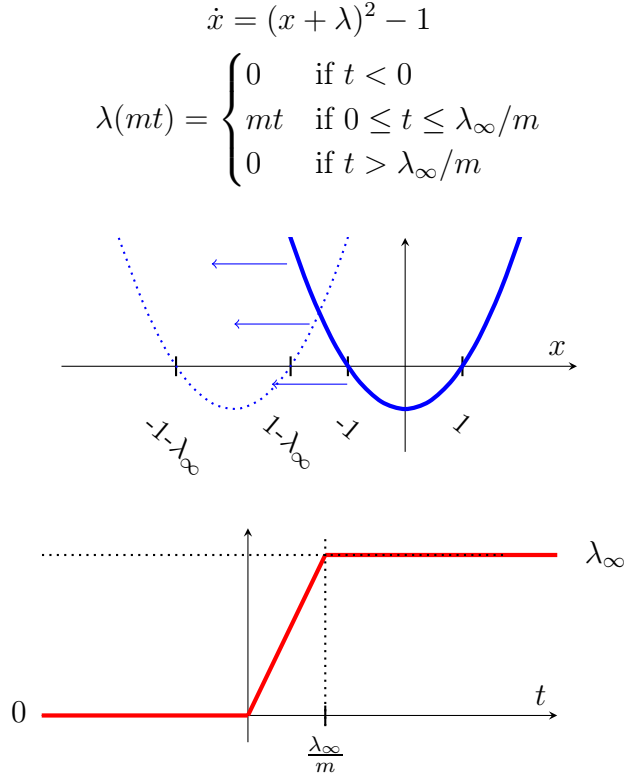
\begin{figure}[H]
		\begin{center}
		\begin{tikzpicture}[]
			\begin{axis}[  axis lines=center,
				xtick={1, -1,-2, -4},
				hide obscured x ticks=false,
				major tick length=15,
				xticklabels={1, -1, 1-$\lambda_{\infty}$, -1-$\lambda_{\infty}$},
				ticklabel style={font=\small, yshift=-1.5ex, color=black, rotate=-40},
				ytick=\empty,
				every tick/.style={black, thick},
				xmax=3,
				xmin=-6,
				ymax=3,
				ymin=-2,
				yscale=0.5,
				xlabel=$x$,
				]
				\addplot[smooth, ultra thick, blue] {x^2-1};
				\addplot[smooth, dotted, thick, blue] {(x+3)^2-1};
			\end{axis}
			
			\draw[->, blue]        (3,2.5)   -- (2,2.5);
			\draw[->, blue]        (3.3,1.7)   -- (2.3,1.7);
			\draw[->, blue]        (3.8,0.9)   -- (2.8,0.9);
		\end{tikzpicture}
	\end{center}
	\vspace{0.1 in}
	\begin{center}
		\begin{tikzpicture}[]
			\begin{axis}[  axis lines=center,
				xtick={3/2},
				hide obscured x ticks=false,
				major tick length=15,
				xticklabels={$\frac{\lambda_{\infty}}{m}$},
				ticklabel style={font=\small, color=black},
				ytick=\empty,
				every tick/.style={black, thick},
				xmax=6,
				xmin=-4,
				ymax=3.5,
				ymin=-0.5,
				yscale=0.5,
				xlabel=$t$,
				]
				\addplot[smooth, ultra thick, red, domain=-4:0] {0};
				\addplot[smooth, ultra thick, red, domain=0:3/2] {2*x};
				\addplot[smooth, ultra thick, red, domain=3/2:6] {3};
				\addplot[dotted, thick, black] {3};
				\addplot[dotted, thick] coordinates {(3/2, -0.2) (3/2, 3.3)};
			\end{axis}
			\node[] at (7.5,2.5) {$\lambda_{\infty}$};
			\node[] at (-0.3,0.4) {0};
		\end{tikzpicture}
	\end{center}
			\caption{A piecewise linear ramping function.}
	\end{figure}
	
	The formal setting for this and related non-smooth ODEs will be reviewed later; however, it can be shown that tipping behavior analogous to the previous example occurs.  For each parameter regime $(\lambda_{\infty}, m)$ the ODE possesses a unique solution $\hat{x}(t)$ such that $\lim\limits_{t\to - \infty}\hat{x}(t) = -1$.\footnote{In this case,  $\hat{x}(t) = -1$ for all $t \in (-\infty, 0]$} Fixing $\lambda_{\infty}$, there exists a critical value $m=m_c(\lambda_{\infty})$ with 
	
	\[\begin{cases}
		\lim\limits_{t\to \infty}\hat{x}(t) = -1-\lambda_{\infty} & \text{for } m < m_c\\
		\lim\limits_{t\to \infty}\hat{x}(t) = 1-\lambda_{\infty} \footnote{} & \text{for } m=m_c\\
		\hat{x}(t) \to \infty \text{ (in finite time) } & \text{for } m > m_c
	\end{cases}.
	\]
	\footnotetext{In this case $\hat{x}(t) = 1-\lambda_{\infty}$ for all $t \in [\lambda_{\infty}/m, \infty)$.}
	For this example, it can be shown that $m_c$ is given by the unique solution to the equation
	\[\frac{2m_c}{\sqrt{m_c-1}}\arctan\left(\frac{1}{\sqrt{m_c-1}}\right)=\lambda_{\infty}.\]
	We remark that $m_c$ is a strictly decreasing function of $\lambda_{\infty}$; that is, a larger amplitude shift allows for a more gentle critical rate of shift while a smaller amplitude shift requires a steeper critical rate of shift. A similar amplitude-rate trade off between $r_c$ and $\lambda_{\infty}$ may be observed in Example \ref{ex:prototype}, for instance by focusing on the maximum slope of the critical ramp function, which occurs at $t=0$.
	
	\label{ex:piecewise}
\end{example}

Each of the two previous examples involves fixing a particular family of ramp functions, parameterized by a variable $r$ or $m$ that controls the overall steepness of the ramp, and then identifying a critical ramp from within the predetermined family. This is the standard point of view that prevails in the rate-induced tipping literature. Instead, we would now like to consider the entire collection of possible functions that interpolate asymptotically from 0 to $\lambda_{\infty}$, subject to appropriate conditions, and seek a general property about the steepness of those which effect tipping behavior.

Our core insight is that the critical piecewise linear ramp function of Example \ref{ex:piecewise} is actually an optimal tipping strategy in an important sense. We will show that any arbitrary scalar ramp function $\lambda(t)$ (monotone non-decreasing, for now) that induces tipping when applied to the same base vector field must attain a slope greater than or equal to $m_c(\lambda_{\infty})$ at least once. The result is a necessary but not sufficient criterion for tipping, or equivalently a safe threshold for non-tipping. 

\section{Change to Co-Moving Coordinates}

For  $\dot{x} = f(x+\lambda(t))$, we consider a change of coordinates to co-moving coordinates,
\begin{align*}
		y&=x+\lambda\\
%		\implies \dot{y} &= \dot{x} + \dot{\lambda}\\
		\implies \dot{y} &= f(y) + \dot{\lambda}(t)
\end{align*}

This transformation has the effect of converting the ramp function that originally translated the base vector field $\dot{x} = f(x)$ leftwards into a pulse function that now translates the same base vector field up and then back down.

\begin{figure}[H]
\begin{tikzpicture}[]
	\begin{axis}[  axis lines=center,
		ticks=none,
		xmax=3,
		xmin=-6,
		ymax=3,
		ymin=-2,
		yscale=0.5,
		xlabel=$x$,
		]
		\addplot[smooth, ultra thick] {x^2-1};
		\addplot[smooth, dotted, thick] {(x+3)^2-1};
	\end{axis}
	
	\draw[->]        (3,2.5)   -- (2,2.5);
	\draw[->]        (3.3,1.7)   -- (2.3,1.7);
	\draw[->]        (3.8,0.9)   -- (2.8,0.9);
	\node[] at (3.5,-0.5) {(a)};
\end{tikzpicture}
\hspace{0.5in}\begin{tikzpicture}[]
	\begin{axis}[  axis lines=center,
		ticks=none,
		xmax=3,
		xmin=-3,
		ymax=6,
		ymin=-2,
		xscale=2/3 ,
		yscale=4/5,
		xlabel=$x$,
		]
		\addplot[smooth, ultra thick, domain=-2:2] {x^2-1};
		\addplot[smooth, dotted, thick, domain=-2:2] {(x)^2-1+3};
	\end{axis}
	
	\draw[->]        (1.5,1.6)   -- (1.5,2.3);
	\draw[->]        (3.1,2.3)   -- (3.1,1.6);
	
	\node[] at (2.3,-0.5) {(b)};
\end{tikzpicture}
	\caption{A change to co-moving coordinates converts the leftward translating ramp function to an up-and-down translating pulse function.}
\end{figure}
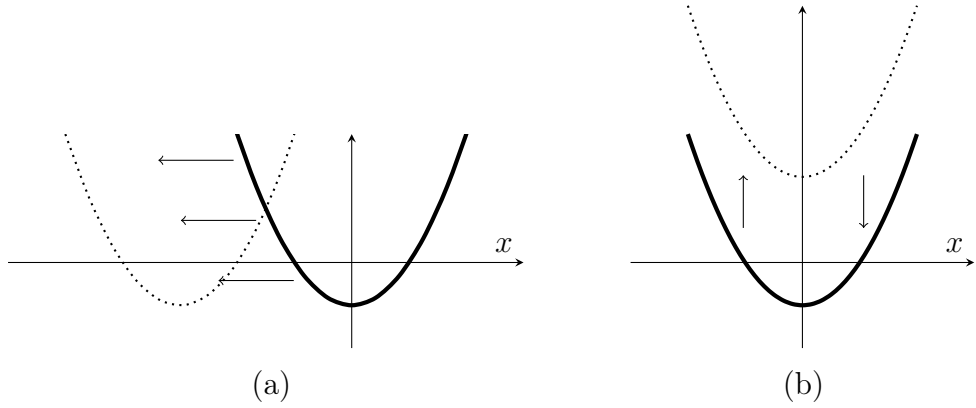

\subsection{Smooth Prototype}

\begin{example}
	For $\lambda(rt)$ as in Example \ref{ex:prototype}  the pulse function is
	$$\dot{\lambda}(rt) = \left( \frac{\lambda_{\infty}}{2}\right)^2r \sech ^2 \left(\frac{\lambda_{\infty}rt}{2}\right)$$
		where, fixing $\lambda_{\infty}$, a smaller value of the parameter $r$ corresponds to a shorter and wider peak, while a larger value of the parameter $r$ corresponds to a taller and narrower peak. 
	
\begin{figure}[H]
	\begin{center}
		
		\begin{tikzpicture}[]
			\begin{axis}[  axis lines=center,
				ticks=none,
				xmax=5,
				xmin=-5,
				ymax=3,
				ymin=-0.5,
				yscale=0.5,
				xlabel=$t$,
				]
				\addplot[smooth, ultra thick, red] {9/4 * (1/cosh (3*x/2))^2};
			\end{axis}
		\end{tikzpicture}
		
	\end{center}
\caption{The pulse function that results from the smooth prototype ramp.}
	\end{figure}
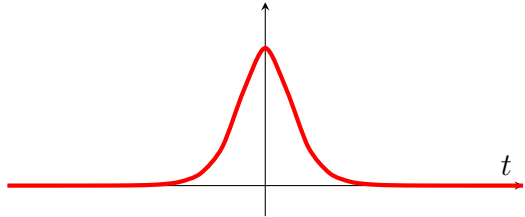
	
Regardless of the choice of $r$, note that the total area under the pulse is unaffected, since it is always equal to $\lambda_{\infty}$. 
\end{example}

\subsection{Piecewise Linear Prototype}

\begin{example}
	\label{ex:step}
	For $\lambda(mt)$ as in Example \ref{ex:piecewise} we obtain a discontinuous step pulse
	$$\dot{\lambda}(mt) = \begin{cases}
		0 & \text{if } t < 0\\
		m & \text{if } 0 \leq t \leq \lambda_{\infty}/m\\
		0 & \text{if } t > \lambda_{\infty}/m
	\end{cases}.$$

\begin{figure}[H]
	\begin{center}
		\begin{tikzpicture}[]
			\begin{axis}[  axis lines=center,
				xtick={3/2},
				ytick = {2},
				major tick length=15,
				xticklabels={$\frac{\lambda_{\infty}}{m}$},
				yticklabels={$m$},
				ticklabel style={font=\small, color=black},
				every tick/.style={black, thick},
				xmax=5,
				xmin=-3,
				ymax=3.3,
				ymin=-0.5,
				yscale=0.5,
				xlabel=$t$,
				]
				\addplot[smooth, ultra thick, red, domain=-3:0] {0};
				\addplot[smooth, ultra thick, red, domain=0:3/2] {2};
				\addplot[smooth, ultra thick, red, domain=3/2:5] {0};
				
				\draw[color=black, dotted, thick] 
				(axis cs:3/2, 0) -- (axis cs:3/2, 2);
			\end{axis}
		\end{tikzpicture}
	\end{center}
	\caption{The pulse function that results from the piecewise linear ramp.}
	\end{figure}
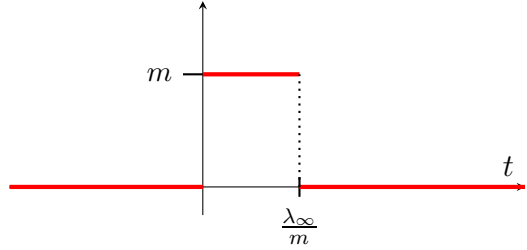
	
	The formal setting for this non-smooth change of coordinates between a non-smooth vector field and a discontinuous vector field will be discussed soon. But here we can see that smaller $m$ corresponds to a shorter and wider step while larger $m$ results in a taller and narrower step, again while always preserving the area $\lambda_{\infty}$ underneath. Here, the critical step has a height of $m_c = m_c(\lambda_{\infty})$ as defined in Example \ref{ex:piecewise}. 
\end{example}

In the co-moving frame of reference, our desired assertion becomes the statement that any arbitrary tipping pulse whose total area equals $\lambda_{\infty}$ must at some point reach or surpass the height $m_c(\lambda_{\infty})$ of the critical step from Example \ref{ex:step}.

We adopt the point of view that an "arbitrary pulse" is a measurable and essentially bounded (and non-negative, for now) control function $u(t)$ added to the base vector field to obtain the nonautonomous ODE
\begin{equation*} 
	\dot{y} =f(y)+u(t) 
\end{equation*} 
Then, fixing the restriction $\int_{-\infty}^{\infty}u(t) ~dt = \lambda_{\infty}$, we would wish to demonstrate that the essential supremum of any control $u$ that induces tipping is at least the height of the critical step pulse from Example \ref{ex:step}. Actually, in order to cast this problem into an optimization problem with a more amenable cost function and constraint, we will instead prove a near-converse before subsequently recovering the full result. 

\begin{remark}
	A control function which simply toggles between a minimum and maximum value, such as the critical step function we have described, is commonly known as a \textbf{bang-bang control}. Bang-bang control arises as an optimal control in several contexts \cite{artsteinDiscreteContinuousBangBang1980}. 
\end{remark}

\section{Formal Setting}

We set our attention on control systems of the form
\begin{equation}
	\dot{y} = f(y)+u(t).
	\label{eq:control}
\end{equation}

Assume that $f: \mathbb{R} \to \mathbb{R}$ is $C^2$. Assume that the control function $u: \mathbb{R} \to \mathbb{R}$ is in the space  $L^\infty$ of measurable and essentially bounded functions. In particular, the measurability of $u$ implies that it is allowed some points of discontinuity, but is continuous almost everywhere, while essential boundedness means bounded almost everywhere. Here the norm is 
$$||u||_\infty = \inf\{C \geq 0  :  |u(t)| \leq C  \text{ for almost every } t \in \mathbb{R} \}.$$ 

It remains to be justified whether this construction produces well-defined solutions. The right hand side of the ODE satisfies the Carath\'eodory conditions for existence and uniqueness of solutions on $\mathbb{R} \times \mathbb{R}$ \cite{haleOrdinaryDifferentialEquations1980}:

\begin{itemize}
	\item For every fixed $t$, $f(y) + u(t)$ is clearly continuous in $y$.
	\item For every fixed $y$, $f(y) + u(t)$ is clearly measurable in $t$.
	\item $u(t)$ is essentially bounded on $\mathbb{R}$ and $f(y)$ is continuous hence bounded on every compact set $K\subset \mathbb{R}$,  so $|f(y) +u(t)| \leq \sup_K |f| + \text{ ess}~\sup_\mathbb{R} |u|$ almost everywhere. The bound is a constant function thus Lebesgue-integrable on $K \times \mathbb{R}$.
\end{itemize} 

This is enough to guarantee local existence of an absolutely continuous solution $y(t)$ to any initial value problem $y(t_0) = y_0$ in the extended sense that
\[\begin{gathered}
	y(t) = y(t_0) + \int\limits_{t_0}^t f(y(s)) + u(s) ~ds\\
	\text{and } \dot{y}(t) = f(y) + u(t) \text{ almost everywhere.}
\end{gathered}\]

Furthermore, $f$ is $C^2$ hence locally Lipschitz continuous, so fixing any $t$ gives a locally Lipschitz continuous $f(y) + u(t)$,  where the local Lipschitz constant is clearly not affected by the choice of $t$, hence is uniform in $t$. This guarantees uniqueness of the local solution. 

\subsection{Tipping Induced by Additive Control}

Now we add the assumption that the ODE $\dot{y} = f(y)$ has a hyperbolic attracting rest point at $y=a$.

\begin{proposition}
	Assuming $f$, $u$, $a$ as above with the additional condition that $\lim\limits_{t\to-\infty} u(t) = 0$, there exists a unique solution $\hat{y}(t)$ to the ODE $\dot{y} = f(y) + u(t)$ such that $\lim\limits_{t\to - \infty}\hat{y}(t) = a$. 
\label{prop:unique_soln}
\end{proposition}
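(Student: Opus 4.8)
The plan is to realize $\hat y$ as the ``pullback'' trajectory that shadows the sink, built by a contraction mapping in shifted coordinates. Since $y=a$ is a hyperbolic attracting rest point we have $f(a)=0$ and $\alpha:=f'(a)<0$. Put $z=y-a$ and $g(z):=f(a+z)-\alpha z$, so that \eqref{eq:control} becomes $\dot z=\alpha z+g(z)+u(t)$ with $g\in C^2$ near $0$, $g(0)=0$, $g'(0)=0$; hence for each small $\rho>0$ there is $L(\rho)$ with $|g(z_1)-g(z_2)|\le L(\rho)|z_1-z_2|$ and $|g(z)|\le L(\rho)|z|$ whenever $|z|,|z_i|\le\rho$, and $L(\rho)\to0$ as $\rho\to0^+$. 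First I would fix $\rho$ so small that $L(\rho)/|\alpha|\le\tfrac12$, and then --- this is where the hypothesis $u(t)\to0$ as $t\to-\infty$ enters --- fix $T$ so negative that $\esssup_{s\le T}|u(s)|\le|\alpha|\rho/2$.

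On the complete metric space $X=\{\,z\in C((-\infty,T],\mathbb R):\ \sup_{t\le T}|z(t)|\le\rho\,\}$ with the sup metric, I would define
\[
(\mathcal Tz)(t)=\int_{-\infty}^{t}e^{\alpha(t-s)}\bigl[g(z(s))+u(s)\bigr]\,ds,
\]
noting that the integral converges absolutely (integrand dominated by $e^{\alpha(t-s)}(L(\rho)\rho+\esssup|u|)$) and, writing it as $e^{\alpha t}\int_{-\infty}^t e^{-\alpha s}[g(z(s))+u(s)]\,ds$, that it is continuous --- indeed locally absolutely continuous --- in $t$ even when $u$ is merely $L^\infty$. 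Since $\int_{-\infty}^t e^{\alpha(t-s)}\,ds=1/|\alpha|$, the choices above give the self-map bound $\|\mathcal Tz\|\le\tfrac1{|\alpha|}\bigl(L(\rho)\rho+\esssup_{s\le T}|u|\bigr)\le\tfrac\rho2+\tfrac\rho2=\rho$ and the contraction bound $\|\mathcal Tz_1-\mathcal Tz_2\|\le\tfrac{L(\rho)}{|\alpha|}\|z_1-z_2\|\le\tfrac12\|z_1-z_2\|$. The Banach fixed point theorem then yields a unique $\hat z\in X$; differentiating the integral equation a.e.\ shows $\dot{\hat z}=\alpha\hat z+g(\hat z)+u$ a.e., so $\hat y:=a+\hat z$ is a Carath\'eodory solution of \eqref{eq:control} on $(-\infty,T]$, which extends to its maximal forward interval by the existence--uniqueness theory already recorded. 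For the limit, set $M(t):=\sup_{s\le t}|\hat z(s)|$; the fixed-point equation together with $|g(\hat z(s))|\le L(\rho)|\hat z(s)|$ gives $M(t)\le\tfrac{L(\rho)}{|\alpha|}M(t)+\tfrac1{|\alpha|}\esssup_{s\le t}|u(s)|$, whence $M(t)\le\tfrac2{|\alpha|}\esssup_{s\le t}|u(s)|\to0$ as $t\to-\infty$, i.e.\ $\hat y(t)\to a$.

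For uniqueness, let $\tilde y$ be any solution with $\tilde y(t)\to a$ as $t\to-\infty$, and choose $T'\le T$ so negative that $\tilde y$ is defined on $(-\infty,T']$ with $|\tilde y(t)-a|\le\rho$ there. On $(-\infty,T']$ the variation-of-constants identity $\tilde y(t)-a=e^{\alpha(t-t_0)}(\tilde y(t_0)-a)+\int_{t_0}^{t}e^{\alpha(t-s)}\bigl[g(\tilde y(s)-a)+u(s)\bigr]\,ds$ holds for all $t_0\le t$; letting $t_0\to-\infty$ and using $\alpha<0$ together with the boundedness of $\tilde y-a$ kills the first term (dominated convergence handles the integral), so $\tilde y-a$ restricted to $(-\infty,T']$ is a fixed point of the map of the same form built with $T'$ in place of $T$. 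That map has the same contraction constant, and the restriction of $\hat z$ is also such a fixed point (the map is causal), so $\tilde y\equiv\hat y$ on $(-\infty,T']$; forward uniqueness from the Carath\'eodory/Lipschitz theory propagates this to the whole common interval of definition.

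The main obstacle is not conceptual but measure-theoretic bookkeeping: checking that $t\mapsto\int_{-\infty}^t e^{\alpha(t-s)}u(s)\,ds$ is locally absolutely continuous and that differentiating the integral equation recovers \eqref{eq:control} in the Carath\'eodory sense for a merely $L^\infty$ control, and --- in the uniqueness step --- justifying that the tail term $e^{\alpha(t-t_0)}(\tilde y(t_0)-a)$ vanishes as $t_0\to-\infty$, which is exactly where hyperbolicity ($\alpha<0$) and the a priori fact that $\tilde y$ stays near $a$ (hence cannot blow up backward) are indispensable. A compactness-based alternative --- take $y_n$ solving the ODE with $y_n(-n)=a$, show by a Gr\"onwall estimate that each $y_n$ stays in $[a-\rho,a+\rho]$ and is equi-Lipschitz on $[-n,T]$, then pass to an Arzel\`a--Ascoli limit --- also gives existence, but still needs the argument above for uniqueness, so the contraction route is cleaner overall.
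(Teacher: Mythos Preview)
Your proof is correct and follows essentially the same strategy as the paper's: translate the sink to the origin, split $f$ into its linear part and a $C^2$ remainder, and realize the desired solution as the unique fixed point of the variation-of-constants operator $z\mapsto\int_{-\infty}^t e^{\alpha(t-s)}[g(z(s))+u(s)]\,ds$ on a ball in $C((-\infty,T])$, with $T$ chosen so that the tail of $u$ is small. The paper writes the argument in $n$-dimensional notation (matrix $A=Df(0)$ and the bound $|e^{At}|\le Ke^{-\alpha t}$), whereas you specialize to $n=1$ with $\alpha=f'(a)<0$; otherwise the operator, the self-map estimate, and the contraction estimate are the same. If anything, your write-up is more explicit than the paper's on three points the paper leaves implicit: verifying that the fixed point actually satisfies the Carath\'eodory ODE, deducing $\hat y(t)\to a$ from the fixed-point equation, and the uniqueness step showing that any solution with the correct backward limit must itself be a fixed point of (a causal restriction of) the same operator.
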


\begin{proof}
	This claim is closely related to a fundamental result in R-tipping where the external force is assumed to be smooth and is not necessarily applied rigidly (see Theorem 2.2 in \cite{ashwinParameterShiftsNonautonomous2017}). In our context, an essentially identical argument carries through, and the co-moving coordinate change makes it slightly less burdensome. We state the proof in $n$ dimensions since it requires no additional effort over one dimension.
	
	First, assume without loss of generality that $a=0$; otherwise, translate the base vector field by replacing $y$ with $y-a$. Define \begin{align*}
		\omega(\epsilon) &= \sup \{|Df(y)-Df(0)|: |y|< \epsilon\}\\
		\delta(T) &= \sup\limits_{t<-T}|u(t)|
	\end{align*}
	Because $f\in C^2$ and $f(0) = 0$ we have $\omega(\epsilon)\to 0$ as $\epsilon \to 0$. Because $\lim\limits_{t\to-\infty} u(t) = 0$ we have $\delta(T)\to 0$ as $T \to \infty$.
	Hyperbolic stability of the equilibrium at $0$ means that there exist $K>0, \alpha>0$ such that
	\[|e^{At}| \leq Ke^{-\alpha t} \text{ for } t \leq 0\] where  $A = Df(0)$.
	Now define $h(y,t) = f(y) + u(t) - A y$ and rewrite the ODE as 
	\[\dot{y} = A y + h(y,t).\]
	
	Notice $D_yh = Df(y) - A = Df(y) - Df(0)$, so for all $t<-T$ we have \[\left|d_yh (y,t)\right| \leq \omega(|y|)\]
	\[\text{and } |h(0,t)| = |u(t)| \leq \delta(T) \text{ almost everywhere.}\]
	Now choose any $\Delta>0,  T_0> 0$ such that
	\[K\alpha^{-1}\omega(\Delta)\leq \frac{1}{2} \text{ and } K\alpha^{-1}\delta(T_0)\leq \frac{\Delta}{2}\]
	
	and consider the space of continuous functions
	\[S = \{y(t)\in C^0((\infty,-T_0]): |y(t)| \leq \Delta \text{ for } t < -T_0\}.\]
	
	We define an operator on $S$
	\[\Phi(y) = \int\limits_{-\infty}^t e^{A(t-s)}h(y(s), s) ~ds\]
	and verify first that it is well defined, second that it is a contraction mapping.
	
	For the former, 
	\begin{align*}|\Phi(y)(t)| &\leq \int\limits_{-\infty}^t Ke^{-\alpha(t-s)}[\delta(T_0) +|y(s)|\omega(|y(s)|)] ~ds\\
		&\leq \int\limits_{-\infty}^t Ke^{-\alpha(t-s)}[\delta(T) +\Delta\omega(\Delta)] ~ds\\
		&\leq K \alpha^{-1}\delta(T_0) + K\alpha^{-1}\Delta \omega(\Delta)\\
		&\leq \Delta
	\end{align*}
	
	and for the latter, since $h(y,t)$ is Lipschitz continuous in $y$ with Lipschitz constant $\omega(\Delta)$,
	\begin{align*}||\Phi(y_1)-\Phi(y_2)|| &= \sup\limits_{t\leq -T_0} \left | \int\limits_{-\infty}^t e^{A (t-s)}\left (h(y_1(s), s) -h(y_2(s), s)\right )~ds \right |\\
		&\leq \sup\limits_{t\leq -T_0}  \int\limits_{-\infty}^t Ke^{-\alpha (t-s)}\left |h(y_1(s), s) -h(y_2(s), s)\right |~ds\\
		&\leq K\alpha^{-1} \sup\limits_{t\leq -T_0}  \left |h(y_1(t), t) -h(y_2(t),t)\right |\\
		&\leq K\alpha^{-1}\omega(\Delta)||y_1 - y_2||\\
		&\leq \frac{1}{2}||y_1 - y_2||
	\end{align*}
	
	So $\Phi$ has a unique fixed point $\hat{y}$. By the variation of parameters formula, which still holds in our control setting, the fixed point $\hat{y}(t)$ is also the unique solution of the ODE that satisfies $|y(t)|\leq \Delta$ for all $t\leq T_0$. Since $\Delta$ can be chosen arbitrarily small, $\lim\limits_{t\to - \infty}\hat{y}(t) = 0$.
\end{proof}

Next, let $D\subset \mathbb{R}$ denote the basin of attraction of the attracting rest point at $y=a$, assume that its boundary $\partial D$ is nonempty. For a scalar system the basin is simply an interval, and the boundary consists either of one or two isolated points. We will assume the generically true property that the boundary points are hyperbolic.

Additionally, we let $u$ decay to 0 in forward time, and desire sufficiently fast decay such that solutions to the nonautonomous ODE limit nicely in forward time to solutions of the base autonomous ODE. For simplicity we assume that $u$ is eventually $C^1$ smooth with exponential decay; the reason is that with these conditions we may call upon a compactification technique developed in \cite{wieczorekCompactificationAsymptoticallyAutonomous2021} to obtain the forward limiting behavior. Though it should be possible to relax the smoothness assumption, we do not address this prospect here. 

\begin{proposition}
Let $\hat{y}(t)$ be the solution with $\lim\limits_{t\to - \infty}\hat{y}(t) = a$, whose existence and uniqueness were shown in Proposition \ref{prop:unique_soln}. Assume that the boundary $\partial D$ of the basin of attraction of the attractor at $y=a$ consists of either one or two hyperbolic unstable rest points. Assume that there exists a time $T$ such that $u$ restricted to $(T, \infty)$ is $C^1$ smooth. Also assume that $\lim\limits_{t\to\infty} u(t) = 0$ with exponential decay, meaning there exists a number $\rho$ such that $\lim\limits_{t\to\infty} \frac{\dot{u}(t)}{e^{-\rho t}}$ exists. Then $\hat{y}$ must exhibit exactly one of three long-term behaviors in forward time:

	\begin{itemize}
			\item $\lim\limits_{t\to \infty} \hat{y}(t) = a$
			\item $\lim\limits_{t\to \infty} \hat{y}(t) \in \partial D$
			\item $\hat{y}(t)$ escapes the closure $\overline{D}$ of the basin. That is, there exists a $T$ within the maximal interval of existence of the solution $\hat{y}(t)$ such that for all $t>T$ where $\hat{y}(t)$ is defined, $\hat{y}(t) \not\in \overline{D}$.	\end{itemize}
\label{prop:3_forward_time_behaviors}
\end{proposition}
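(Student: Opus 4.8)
The plan is to pin down the forward $\omega$-limit set of $\hat{y}$. A one-sided-barrier argument near the hyperbolic boundary repellers first isolates the case in which $\hat{y}$ eventually leaves $\overline{D}$ for good, and in the complementary case the compactification of \cite{wieczorekCompactificationAsymptoticallyAutonomous2021} makes the problem genuinely autonomous, after which the scalar (gradient) structure forces the $\omega$-limit set to be a single equilibrium of $\dot{y}=f(y)$ lying in $\{a\}\cup\partial D$. Mutual exclusivity of the three alternatives is straightforward --- $a$ lies in the interior of $\overline{D}$, and a trajectory obeying alternative 1 or 2 is globally defined and convergent in forward time while alternative 3 describes a trajectory that eventually leaves $\overline{D}$ and stays a positive distance from it --- so I concentrate on exhaustiveness.

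First I would build one-sided barriers near $\partial D$. Let $q\in\partial D$ with $\mu:=f'(q)>0$ (a left endpoint is handled symmetrically). Because $f(y)\ge\tfrac{\mu}{2}(y-q)$ for $y$ slightly above $q$ and $|u(t)|$ is eventually small relative to $\mu$, there are $\eta>0$ and $T'$ so that $\dot{y}=f(q+\eta)+u(t)>0$ for all $t\ge T'$; thus, past time $T'$, the level $y=q+\eta$ can be crossed only upward, so once $\hat{y}$ exceeds $q+\eta$ it stays above it forever. Hence for $t\ge T'$ either $\hat{y}$ eventually rises above $q+\eta$ (or falls below the analogous level at a left endpoint), in which case it stays at distance $\ge\eta$ from $\overline{D}$ thereafter and we are in alternative 3; or $\hat{y}$ remains in a fixed bounded neighborhood $N$ of $\overline{D}$, chosen (using that hyperbolic zeros of $f$ are isolated) so that $\overline{N}$ contains no zero of $f$ besides $a$ and the points of $\partial D$. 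If $D$ is unbounded one additionally uses mild control on $f$ at the relevant infinite end, automatic for the prototype $f(y)=y^2-1$, to keep the non-escaping trajectory bounded.

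It remains to handle $\hat{y}([T',\infty))\subseteq N$, so that $\hat{y}$ is bounded in forward time. Here I would invoke \cite{wieczorekCompactificationAsymptoticallyAutonomous2021}: since $u$ is eventually $C^1$ with exponential decay and $u(t)\to 0$, adjoining a bounded coordinate $s$ that increases monotonically along orbits to a limit $s_+$ converts the system into an autonomous $C^1$ system $(\dot{y},\dot{s})=(f(y)+\tilde{u}(s),\,g(s))$ whose restriction to the invariant wall $\{s=s_+\}$ is exactly $\dot{y}=f(y)$, with $g>0$ on $(s_-,s_+)$, $g(s_+)=0$, $g'(s_+)<0$, and in which each equilibrium $(e,s_+)$ with $e\in\{a\}\cup\partial D$ is hyperbolic (triangular Jacobian, eigenvalues $f'(e)$ and $g'(s_+)$). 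The lifted forward orbit of $\hat{y}$ stays in a compact set, so its $\omega$-limit set $\Omega$ is nonempty, compact, connected, invariant, and --- since $s\to s_+$ --- contained in $\{s=s_+\}$, hence a connected invariant subset of the scalar flow $\dot{y}=f(y)$ lying in $\overline{N}$.

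The crucial step, which I expect to be the main obstacle, is to upgrade this to $\Omega=\{e^*\}$ for a single zero $e^*$ of $f$, i.e.\ to rule out that $\Omega$ is an interval bounded by two equilibria joined by a heteroclinic orbit --- precisely the sort of recurrence that a merely asymptotically autonomous system could exhibit and that the compactification exists to preclude. For this I would exhibit a Lyapunov function near the wall: with $V(y)=-\int_0^y f$ the potential of the scalar flow (so $\tfrac{d}{dt}V=-f(y)^2$ along $\dot{y}=f(y)$), the function $\mathcal{V}=V(y)+C(s_+-s)$ is, for $C$ large, strictly decreasing along orbits in a neighborhood of $\{s=s_+\}$ with $\tfrac{d}{dt}\mathcal{V}=0$ only at the equilibria $(e,s_+)$ --- the cross term $-f(y)\tilde{u}(s)$ being absorbed because $g$ and $-\tilde{u}$ both vanish to first order at $s_+$, so a routine estimate gives $\tfrac{d}{dt}\mathcal{V}\le-\tfrac12 f(y)^2-c(s_+-s)$. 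Then $\mathcal{V}$ is bounded and nonincreasing along the eventually slab-confined lifted orbit, hence constant on $\Omega$; as $\Omega\subseteq\{s=s_+\}$ where $\mathcal{V}=V$, the potential $V$ is constant on $\Omega$; and since $V$ strictly decreases along non-constant orbits of $\dot{y}=f(y)$ while $\Omega$ is connected and invariant, $\Omega$ must be a single equilibrium, necessarily $e^*\in\{a\}\cup\partial D$. So $\hat{y}(t)\to a$ (alternative 1) or $\hat{y}(t)\to e^*\in\partial D$ (alternative 2). Alternatively, one may simply quote the forward-asymptotic statements already established in \cite{wieczorekCompactificationAsymptoticallyAutonomous2021}.
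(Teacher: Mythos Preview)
Your approach is correct and rests on the same compactification technique from \cite{wieczorekCompactificationAsymptoticallyAutonomous2021} that the paper invokes; the paper's own proof is only a brief sketch deferring entirely to that reference and to the observation that hyperbolicity of $\partial D$ survives the gluing. You go further than the paper by supplying a one-sided barrier argument to isolate the escape alternative and an explicit Lyapunov function $\mathcal{V}=V(y)+C(s_+-s)$ on the compactified system to pin down the $\omega$-limit set as a single equilibrium, which makes your argument considerably more self-contained without diverging in spirit from the paper's outline.
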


\begin{proof}
	We leave this proof as a brief sketch, and direct the reader toward the sources \cites{wieczorekCompactificationAsymptoticallyAutonomous2021, wieczorekRateinducedTippingThresholds2023} for full details on the compactification procedure. In the future limiting autonomous system $\dot{y} =f(y)$, the listed behaviors comprise the only 3 possible behaviors for any solution. The $C^1$-smooth exponential decay of $u$ to zero allows the use of a compactification trick in forward time by "gluing on" the forward limiting autonomous system. This results in a smooth $(n+1)$-dimensional autonomous ODE where the hyperbolic basin boundary gains one unstable time dimension but remains hyperbolic. The exponential decay eliminates any pathological behaviors that might arise through compactifying. This construction provides a correspondence between forward behaviors in the glued-on cross section and forward behaviors of the original nonautonomous solutions. 
\end{proof}

\begin{definition}
	Assume $n=1$ and let $\hat{y}(t)$ be the unique solution with $\lim\limits_{t\to - \infty}\hat{y}(t) = a$. Out of the three possible forward behaviors from Proposition \ref{prop:3_forward_time_behaviors}, if it is not the case that $\lim\limits_{t\to  \infty}\hat{y}(t) = a$, then we say $u(t)$ \textbf{induces tipping} in Equation \eqref{eq:control}. If $\lim\limits_{t\to  \infty}\hat{y}(t) \in$ $\partial D$ we say that $u(t)$ is \textbf{critical}. 
	\label{def:tipping_critical_control}
\end{definition}

\subsection{Tipping Induced by Translational External Force}

Next, consider again the original rigidly shifting ODE 
\begin{equation}
	\dot{x} = f(x+\lambda(t)).
	\label{eq:rtipping}
\end{equation}
where we assume $\lambda(t):\mathbb{R}\to \mathbb{R}$ is globally Lipschitz continuous and satisfies the asymptotic conditions

\begin{itemize}
	\item $\lim\limits_{t\to-\infty} \lambda(t) = 0$.
	\item $\lim\limits_{t\to\infty} \lambda(t) = \lambda_{\infty} \text{ for a finite constant } \lambda_{\infty}$.
\end{itemize}

Lipschitz continuity of $\lambda$ implies absolute continuity of $\lambda$, which guarantees its almost-everywhere differentiability. The resulting measurable derivative $u=\dot{\lambda}$ is essentially bounded (actually, bounded) by the global Lipschitz constant of $\lambda$. 

The transformation
\begin{equation}
	y=x+\lambda
	\label{eq:coordinatechange}
\end{equation}
is absolutely continuous with absolutely continuous inverse, establishing a one-to-one, absolutely continuous correspondence between solutions of Equation \eqref{eq:rtipping} and solutions of
\begin{equation}
	\dot{y} = f(y) + \dot{\lambda}(t).
\end{equation}

Hence we obtain existence and uniqueness of local solutions to Equation \eqref{eq:rtipping} in the same extended sense as before. 

From the asymptotic conditions on $\lambda$ it follows that $\lim\limits_{t\to\pm\infty} u(t) = 0$. Add also now the assumption that there exists a $T$ such that $\lambda$ restricted to $(T, \infty)$ is $C^2$ smooth and the derivative $u = \dot{\lambda}$ decays exponentially as $t\to\infty$, so that the conditions for Propositions \ref{prop:unique_soln} and \ref{prop:3_forward_time_behaviors} are satisfied. Then it is straightforward to check the following proposition:

\begin{proposition}
	The solution $\hat{y}$ from Proposition \ref{prop:unique_soln} of Equation \eqref{eq:control} corresponds to a unique solution $\hat{x}$ of \eqref{eq:rtipping} such that $\lim\limits_{t\to  -\infty}\hat{x}(t) = a$, and the 3 cases from Proposition \ref{prop:3_forward_time_behaviors} correspond respectively to 3 long term behaviors for $\hat{x}$ in forward time:
	\begin{itemize}
		\item $\lim\limits_{t\to \infty}\hat{x}(t) = a-\lambda_{\infty}$
		\item $ \lim\limits_{t\to \infty}\hat{x}(t) \in \partial D-\lambda_{\infty}$
		\item $\hat{x}(t) \text{ escapes } \overline{D}-\lambda_{\infty}.$
	\end{itemize}
where $S-\lambda_{\infty}$ denotes the set $\{s - \lambda_{\infty} ~|~ s\in S\subset \mathbb{R}^n\}$
\label{prop:unique_soln_three_forward_behaviors_untransformed_x}
\end{proposition}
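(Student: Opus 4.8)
The plan is to carry everything through the absolutely continuous change of coordinates $y = x+\lambda(t)$ from Equation~\eqref{eq:coordinatechange}, which the preceding discussion already identifies as a one-to-one correspondence between solutions of \eqref{eq:rtipping} and solutions of $\dot y = f(y)+\dot\lambda(t)$ and which, since $\lambda$ is globally Lipschitz and bounded, also respects maximal intervals of existence (so finite-time blow-up of one solution corresponds to finite-time blow-up of the other). Concretely I would set $\hat x(t) := \hat y(t) - \lambda(t)$ with $\hat y$ the solution produced by Proposition~\ref{prop:unique_soln} for the control $u = \dot\lambda$, and then extract each asserted limit by subtracting $\lambda(t)$ and using $\lambda(t)\to 0$ as $t\to-\infty$ and $\lambda(t)\to\lambda_{\infty}$ as $t\to\infty$.

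For the backward-time statement this gives $\hat x(t) = \hat y(t) - \lambda(t) \to a - 0 = a$, and uniqueness transfers because any solution $\tilde x$ of \eqref{eq:rtipping} with $\tilde x(t)\to a$ would yield $\tilde y := \tilde x + \lambda \to a$, forcing $\tilde y = \hat y$ by Proposition~\ref{prop:unique_soln} and hence $\tilde x = \hat x$. For the forward-time trichotomy I would apply Proposition~\ref{prop:3_forward_time_behaviors} to $\hat y$ and treat the three mutually exclusive alternatives in turn: if $\hat y(t)\to a$ then $\hat x(t)\to a - \lambda_{\infty}$; if $\hat y(t)\to p\in\partial D$ then $\hat x(t)\to p - \lambda_{\infty}\in\partial D - \lambda_{\infty}$.

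The escape case is the only one requiring real work, precisely because $\lambda(t)$ merely tends to $\lambda_{\infty}$ rather than equalling it, so "$\hat y(t)\notin\overline D$ eventually" does not by itself give "$\hat x(t)=\hat y(t)-\lambda(t)\notin\overline D-\lambda_{\infty}$ eventually"; one must rule out $\hat y$ lingering within distance $|\lambda_{\infty}-\lambda(t)|$ of $\partial D$. Here I would exploit the scalar structure: $\overline D$ is an interval whose endpoints are hyperbolic unstable rest points of $\dot y = f(y)$, and $\hat y$, being continuous and outside $\overline D$ for all large $t$, is confined to a single component of the complement, say $(b_+,\infty)$ with $b_+\in\partial D$. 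Just above $b_+$ the sign of $f$ points away from $\overline D$ and $f(y)\ge c(y-b_+)$ for some $c>0$, so with $\dot\lambda(t)\to 0$ the trajectory cannot re-approach $b_+$ arbitrarily closely without in fact converging to it — which is excluded, that being the second alternative of Proposition~\ref{prop:3_forward_time_behaviors}. Hence $\liminf_{t\to\infty}\hat y(t) > b_+$, i.e. $\hat y(t)\ge b_+ + \delta$ for some $\delta>0$ and all large $t$, whence (using $\lambda(t)\to\lambda_{\infty}$) $\hat x(t)=\hat y(t)-\lambda(t) > b_+ - \lambda_{\infty}$ for all large $t$, so $\hat x(t)\notin\overline D - \lambda_{\infty}$; the left-escape case is symmetric. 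I expect this conversion of permanent escape in the $y$-frame into permanent escape from the fixed translate $\overline D-\lambda_{\infty}$ in the $x$-frame — which is what forces the uniform-separation argument above — to be the main obstacle, while the rest is direct substitution.
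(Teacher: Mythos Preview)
The paper does not actually prove this proposition; it simply declares it ``straightforward to check'' immediately before the statement. Your proposal supplies the verification the paper omits, and your overall approach---transport through $y=x+\lambda$, use $\lambda\to 0$ backward and $\lambda\to\lambda_\infty$ forward, transfer uniqueness via Proposition~\ref{prop:unique_soln}---is exactly what the author has in mind.

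You also go further than the paper by isolating the one point that is \emph{not} pure substitution: in the escape case, ``$\hat y(t)\notin\overline D$ eventually'' does not instantly give ``$\hat x(t)\notin\overline D-\lambda_\infty$ eventually'' because $\lambda(t)$ may overshoot $\lambda_\infty$. Your resolution via $\liminf_{t}\hat y(t)>b_+$ is correct in spirit; to make the step ``cannot re-approach $b_+$ arbitrarily closely without converging to it'' fully rigorous, one clean way is: fix $[p,q]\subset(b_+,b_++\epsilon_0)$ where $f>0$, choose $T_1$ with $|\dot\lambda(t)|<\min_{[p,q]}f$ for $t>T_1$, and observe $\hat y$ can then never cross $[p,q]$ downward; hence either $\hat y\ge p$ eventually (done), or $\hat y$ is trapped in $(b_+,p)$ forever, in which case the asymptotically-autonomous structure already invoked for Proposition~\ref{prop:3_forward_time_behaviors} forces $\hat y\to b_+$, contradicting that we are in the third alternative. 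The finite-blow-up subcase is immediate since then $\hat y(t)\to+\infty$ while $\lambda$ stays bounded.
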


\begin{definition}
	We say $\lambda$ \textbf{induces tipping} in Equation \eqref{eq:rtipping} if and only if $u= \dot{\lambda}$ induces tipping in Equation \eqref{eq:control}. And similarly we say $\lambda$ is \textbf{critical} if and only if $u$ is critical.
\end{definition}

\begin{remark}
	For an $r$-parameterized family of smooth scalar ramp functions $\lambda$, rate-induced tipping is typically defined via end-point tracking/non-tracking of a quasi-static equilibrium \cite{ashwinParameterShiftsNonautonomous2017}, or as a nonautonomous bifurcation of a pullback attractor that loses its forward attraction \cite{hoyer-leitzelRethinkingDefinitionRateinduced2021}. In our present application, we require no more than the simply stated definition above. It is equivalent to the quasi-static equilibrium and pullback attractor definitions found in the literature for the parameterized smooth case. 
\end{remark}

\section{Monotone Ramp, One-Sided Basin Version}

Let us initially restrict our attention to the case where $D$ is half infinite. Without loss of generality, assume $$D = (-\infty, \beta) \text{ and } \beta < \infty.$$ 

Additionally, let us initially assume that $\lambda(t)$ is monotone non-decreasing, hence $$u(t) = \dot{\lambda}(t) \geq 0 \text{ wherever it is defined.}$$ Both these restrictions, that $D$ be half infinite and that $\lambda$ be monotone, are primarily for the sake of ease in the initial exposition. Afterward we explain how remove both these assumptions, which requires slight adjustment to the statement of the result. 

\begin{figure}[H]
	\begin{center}
		
		\begin{tikzpicture}
			\begin{axis}[axis lines=center,
				xlabel=$y$,
				xmin=-15,
				xmax=15,
				ymin=-10,
				ymax=5,
				xscale=1.2,
				yscale=0.7,
				xtick={-8.22726, 10.5997},
				ytick={-8.47549},
				hide obscured x ticks=false,
				hide obscured y ticks=false,
				major tick length=0,
				xticklabels={$a$, $\beta$},
				yticklabels={$-\mu$},
				xticklabel style={font=\small, xshift=-1.5ex, color=black},
				every tick/.style={black, thick},
				]
				\addplot[black, mark=*,mark options={yscale=1.2, xscale=0.7}] coordinates{
					(-8.22726, 0)
					(10.5997,0)};
				\addplot[smooth, thick, black, domain=-12:12] {0.1*(((-0.3*x) + 3)*(-0.3*x-2)^2*(-0.3*x + 1)^3 - 50)};
				\addplot[smooth, thick, black, dotted, domain=0:8.57795] {-8.47549};
				\node[] at (9,3.5) {$f(y)$};
			\end{axis}
		\end{tikzpicture}
		
	\end{center}
	\caption{An arbitrary scalar vector field $f(y) \in C^2$ with an attracting rest point at $y=a$ whose basin of attraction is the half open interval $(-\infty, \beta)$. The minimum value of $f(y)$ on $[a, \beta]$ is denoted $-\mu$.}
\end{figure}
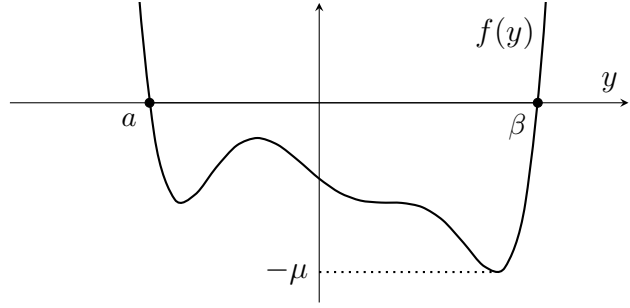

Let $-\mu$ be the minimum value of $f$ on $[a,\beta]$. For any constant $M > \mu$ consider the initial value problem 
\begin{gather*}
	\dot{y} = f(y)+ M\\
	y(0)=a
\end{gather*}

Since the right hand side of the ODE is positive for all $y\in [a, \beta]$, the solution $y(t)$ of the initial value problem is strictly increasing there. Clearly a unique time $T_M>0$ exists such that $y(T_M) = \beta$. Define the following bang-bang control function
	\begin{equation}B_M(t) = \begin{cases}
	0 & \text{if } t < 0\\
	M & \text{if } 0 \leq t \leq T_M\\
	0 & \text{if } T_M < t
	\label{eq:bang_bang}
\end{cases}.\end{equation}

By construction $B_M(t)$ is critical in the sense of Definition \ref{def:tipping_critical_control}. That is, it steers the initial condition $y=a$ to an end state exactly balanced on the boundary $y=\beta$ of the basin of attraction. 

We now introduce the optimization problem for which we will claim that $B_M(t)$ is an optimal solution. 

\begin{problem} Fix a constant $M > \mu$ as before. Call a pair $(y(t), u(t))$ an \textbf{admissible pair} if $y$ is absolutely continuous on $\mathbb{R}$, $u$ is measurable on $\mathbb{R}$, and they solve the ODE $\dot{y} = f(y)+u(t)$ subject to the constraints:
\begin{itemize}
	\item $\lim\limits_{t\to -\infty}y(t) = a$,
	\item $\lim\limits_{t \to \infty}y(t) = \beta$,
	\item $u(t) \in [0,M]$ for almost every $t$.
\end{itemize}
In this case, we also say $u$ is an \textbf{admissible control} and $y$ is the corresponding \textbf{admissible trajectory}. For an admissible pair $(y,u)$, if it achieves a global minimum value of the integral $$\int_{-\infty}^{\infty} u(t) dt$$ among all admissible pairs, then $(y,u)$ is called an \textbf{optimal pair}. 
In this case, we also say $u$ is an \textbf{optimal control} and $y$ is the corresponding \textbf{optimal trajectory}.
\label{prob:optimal_control_scalar_half_basin_monotone}
\end{problem}

\begin{lemma} The bang-bang function $B_M$ defined in \eqref{eq:bang_bang} is an optimal control for Problem \ref{prob:optimal_control_scalar_half_basin_monotone}, and every optimal control is equal to $B_M$ almost everywhere and up to time translation. 
	\label{lemma:scalar_optimal}
\end{lemma}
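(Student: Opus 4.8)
The plan is to produce an explicit certificate (Lyapunov-type) function $V$ on $[a,\beta]$ that rewrites the cost functional as a manifestly sign-definite integral, which will simultaneously bound $\int u\,dt$ below by the cost of $B_M$ and isolate the equality case.

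The first step is to confine every admissible trajectory to $[a,\beta]$. Since $D=(-\infty,\beta)$ is the basin of the hyperbolic attractor $a$ and $\beta$ is a hyperbolic unstable rest point, $f>0$ on $(-\infty,a)$, $f<0$ on $(a,\beta)$ (the basin contains no equilibrium but $a$, and $f$ cannot change sign without one), and $f>0$ on $(\beta,\beta+\varepsilon)$ for small $\varepsilon$. With $u\ge 0$ the field $f(y)+u$ is then strictly positive on $(-\infty,a)$ and on $(\beta,\beta+\varepsilon)$, so a trajectory dipping below $a$ cannot have limited down to $a$ as $t\to-\infty$, and a trajectory exceeding $\beta$ can never return to $(-\infty,\beta]$, hence cannot limit to $\beta$ as $t\to\infty$; both contradict admissibility. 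Confinement gives $f(y(t))+M\ge M-\mu>0$ and $-f(y(t))\ge 0$ for all $t$, and since $f(y(t))$ keeps a fixed sign near each of $\pm\infty$ it also gives $f(y(\cdot))\in L^1(\mathbb{R})$ for every admissible pair with $\int u<\infty$ (the only competitors that matter).

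Now set $V(y)=\int_a^y\frac{-f(s)}{f(s)+M}\,ds$ on $[a,\beta]$, which is $C^1$, nondecreasing, with $V(a)=0$ and $V(\beta)=V^\ast:=\int_a^\beta\frac{-f}{f+M}$; a change of variables along the bang-bang trajectory gives $\int B_M\,dt=MT_M=(\beta-a)+V^\ast$. For an admissible pair $(y,u)$ of finite cost, the fundamental theorem of calculus on $\mathbb{R}$ (legitimate by the tail integrability just noted) gives both $\int u\,dt=(\beta-a)+\int(-f(y))\,dt$ and $V^\ast=\int\frac{-f(y)}{f(y)+M}\bigl(f(y)+u\bigr)\,dt$. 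Combining these with $\int B_M\,dt=(\beta-a)+V^\ast$,
\[
\int_{-\infty}^{\infty}u\,dt-\int_{-\infty}^{\infty}B_M\,dt=\int_{-\infty}^{\infty}\frac{\bigl(-f(y(t))\bigr)\bigl(M-u(t)\bigr)}{f(y(t))+M}\,dt\ \ge\ 0,
\]
the integrand being a product of three nonnegative factors; this proves $B_M$ is optimal. For uniqueness, optimality forces this integrand to vanish a.e., so, since $-f(y)>0$ strictly on $(a,\beta)$ and $f(y)+M>0$, we get $u(t)=M$ for a.e.\ $t$ in the open set $\{t:a<y(t)<\beta\}$. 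On each component of that set $\dot y=f(y)+M>0$, so $y$ increases strictly; a brief phase-line argument shows there is exactly one component, that it is a bounded interval along which $y$ runs from $a$ to $\beta$ (it cannot be unbounded, since $\dot y\to M\neq 0$ near either endpoint), and that $y$ is constant before and after it, forcing $u=0$ a.e.\ there. Uniqueness of solutions of $\dot y=f(y)+M$ then identifies $y$, hence $u$, with a time translate of the bang-bang, which is the claim.

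I expect the main obstacle to be exactly the confinement step and its integrability bookkeeping: because $[a,\beta]$ is not forward invariant when $u\ge 0$, confinement cannot come from a naive invariance argument but must be extracted from the boundary conditions $y(\pm\infty)\in\{a,\beta\}$ together with the forward-behavior trichotomy of Proposition~\ref{prop:3_forward_time_behaviors}; and the two invocations of the fundamental theorem of calculus over all of $\mathbb{R}$ require the rate of decay of $f(y(t))$ at $\pm\infty$ to be controlled. By contrast the algebraic core — the choice of $V$ and the resulting sign-definite integrand — is short once these points are settled.
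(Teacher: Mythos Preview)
Your proof is correct and reaches the same lower bound $(\beta-a)+\int_a^\beta\frac{-f}{f+M}\,dy$ as the paper, but by a somewhat different route. The paper first argues, via an excision trick, that any optimal trajectory must be strictly increasing while in $(a,\beta)$, then changes the integration variable from $t$ to $y$ and bounds the integrand $\frac{f(y)}{f(y)+u}$ pointwise by $\frac{f(y)}{f(y)+M}$. You instead prove confinement to $[a,\beta]$, introduce the certificate $V(y)=\int_a^y\frac{-f}{f+M}$, and stay in the $t$-variable, obtaining the sign-definite remainder $\int\frac{(-f(y))(M-u)}{f(y)+M}\,dt$ directly for \emph{every} admissible pair, monotone or not. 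Your approach buys a cleaner logical structure---no preliminary reduction to monotone competitors, and an immediate equality analysis---at the price of the confinement and $L^1$ bookkeeping you correctly flag; the paper's version is shorter on the page but its excision step is somewhat informal as written. The two arguments are essentially dual: your $V'$ is exactly the integrand that appears after the paper's change of variables, and your sign-definite identity is what one obtains by pulling that change of variables back to $t$ and subtracting the bang-bang cost.
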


\begin{proof}
	Note if an optimal control exists, the corresponding optimal state trajectory $y(t)$ must be strictly increasing during all times $t$ such that $y(t) \in (a,\beta)$. Otherwise, there would exist $t_1 < t_2$ with $y(t_1) = y(t_2)$, $y(t) \in (a, \beta)$ for all $t \in  [t_1, t_2]$, and $\dot{y}(t) \geq 0$ on a subset of positive measure of $[t_1, t_2]$. Because $f(y(t)) < 0$ for all $t \in  [t_1, t_2]$ we must have $u(t) > 0$ on that same subset of positive measure of $[t_1, t_2]$. By definition $u(t) \geq 0$ everywhere. So by entirely excising the interval $[t_1, t_2)$ we produce a strictly lower cost admissible control. 

Restricting to $t$ such that  $y(t)\in (a,\beta)$, we have that $y(t)$ is invertible and $\dot{y}(t)>0$ for almost all $t$. We can now directly compute a lower bound on the integral \begin{align*}
		& \int\limits_{-\infty}^{\infty} u(t) ~ dt \\
		= & \int\limits_{-\infty}^{\infty} \dot{y}(t)-f(y(t)) ~ dt \\
		= & \int\limits_{-\infty}^{\infty} \dot{y}(t) ~ dt-\int\limits_{-\infty}^{\infty}f(y(t)) ~ dt \\
		=& (\beta-a) -\int\limits_{-\infty}^{\infty}f(y(t)) ~ dt\\
		=& (\beta-a) -\int\limits_{a}^{\beta}\frac{f(y)}{\dot{y}} ~ dy \\ %\text{ by substitution }
		=& (\beta-a) -\int\limits_{a}^{\beta}\frac{f(y)}{f(y) + u(t(y))} ~ dy \\
		\geq & (\beta-a) -\int\limits_{a}^{\beta}\frac{f(y)}{f(y) + M} ~ dy
	\end{align*}
	The latter inequality follows from the facts that $\dot{y} = f(y) +u >0$ and $f(y)<0$, thus \begin{align*}
	& 0 < f(y) +u \leq f(y)+M \\
	\implies& \dfrac{1}{f(y) +u} \geq \dfrac{1}{f(y) +M} \\
	\implies& \dfrac{-f(y)}{f(y) +u} \geq \dfrac{-f(y)}{f(y) +M}
\end{align*}
	
This lower bound on the value of the cost integral is achieved exactly by the bang-bang control $B_M$, hence we conclude $B_M$ is an optimal control. Any other optimal control must be equal to $M$ at almost all times when $y \in (a,b)$. Clearly the optimal strategy outside of this is to set $u = 0$ almost everywhere. Thus any optimal control is equal to $B_M$ almost everywhere and up to time-translation.
	\end{proof}
	
	\begin{lemma}
		The integral of the bang-bang control function \eqref{eq:bang_bang}, which is $$\int\limits_{-\infty}^{\infty}B_M(t) dt = M \cdot T_M,$$ is a strictly decreasing continuous function of $M$. Additionally, its limiting behavior satisfies
		\begin{itemize}
			\item $\lim\limits_{M\to \infty} M  \cdot T_M = \beta - a$,
			\item $\lim\limits_{M\to \mu^+} M  \cdot T_M = \infty$.
		\end{itemize}
		\label{lemma:decreasing}
	\end{lemma}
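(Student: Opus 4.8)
The plan is to reduce both the monotonicity and the asymptotics to an explicit one-variable integral. Exactly as in the computation carried out inside the proof of Lemma~\ref{lemma:scalar_optimal}, separating variables in $\dot y=f(y)+M$, $y(0)=a$ yields
\[
T_M=\int_{a}^{\beta}\frac{dy}{f(y)+M},\qquad
M\,T_M=\int_{a}^{\beta}\frac{M\,dy}{f(y)+M}=\int_{a}^{\beta}\frac{dy}{1+f(y)/M}.
\]
To see these are well defined, note first that $f<0$ on $(a,\beta)$: every point of $D=(-\infty,\beta)$ flows to $a$, so there is no equilibrium of the base field in $(a,\beta)$, and since $f<0$ immediately to the right of the attractor $a$, the sign of $f$ cannot change on $(a,\beta)$. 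As $f(a)=f(\beta)=0$, this also shows $\mu>0$ and $f(y)\in[-\mu,0)$ for $y\in(a,\beta)$. Consequently $f(y)+M\ge M-\mu>0$ on $[a,\beta]$, every integrand above is continuous and bounded there, and $T_M$ and $M\,T_M$ are finite and positive.

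For strict monotonicity, fix $y\in(a,\beta)$; since $f(y)<0$, the quantity $f(y)/M$ is strictly increasing in $M$ while $1+f(y)/M>0$, so $M\mapsto\bigl(1+f(y)/M\bigr)^{-1}$ is strictly decreasing on $(\mu,\infty)$. Integrating this strict pointwise inequality over the positive-measure set $(a,\beta)$ shows $M\mapsto M\,T_M$ is strictly decreasing. Continuity (indeed $C^1$-smoothness) then follows from differentiation under the integral sign: on any compact subinterval $[\mu+\varepsilon,R]$ the integrand $M/(f(y)+M)$ and its $M$-derivative $f(y)/(f(y)+M)^2$ are bounded uniformly in $y$ by $R/\varepsilon$ and $\mu/\varepsilon^2$ respectively, which legitimizes both continuity and the formula $(M\,T_M)'=\int_a^\beta f(y)(f(y)+M)^{-2}\,dy<0$.

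For the limits, as $M\to\infty$ the integrand $M/(f(y)+M)\to1$ pointwise and is bounded by $2$ once $M\ge2\mu$, so dominated convergence gives $M\,T_M\to\int_a^\beta1\,dy=\beta-a$. The remaining limit $M\to\mu^+$ is the one step requiring real care, and I expect it to be the main obstacle. As $M\downarrow\mu$ the integrand $1/(f(y)+M)$ increases pointwise to $1/(f(y)+\mu)\in[0,+\infty]$, so monotone convergence gives $T_M\uparrow\int_a^\beta\frac{dy}{f(y)+\mu}$, and it suffices to show this last integral diverges. Pick $y^\ast\in(a,\beta)$ with $f(y^\ast)=-\mu$; it is interior because $f(a)=f(\beta)=0>-\mu$, so $f'(y^\ast)=0$, and because $f\in C^2$ Taylor's theorem gives, on a neighborhood of $y^\ast$, $0\le f(y)+\mu=f(y)-f(y^\ast)\le\tfrac12 C\,(y-y^\ast)^2$ with $C=\max_{[a,\beta]}|f''|$ (and $C>0$, since $C=0$ would force $f$ affine, hence identically zero, on $[a,\beta]$). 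Hence $1/(f(y)+\mu)\ge 2C^{-1}(y-y^\ast)^{-2}$ near $y^\ast$, which is not integrable, so $\int_a^\beta\frac{dy}{f(y)+\mu}=\infty$, whence $T_M\to\infty$ and therefore $M\,T_M\ge\mu\,T_M\to\infty$. Everything outside this divergence argument is routine convergence-theorem bookkeeping.
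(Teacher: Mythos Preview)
Your argument is correct, and it proceeds by a genuinely different route than the paper. Where the paper invokes the optimality result of Lemma~\ref{lemma:scalar_optimal} to obtain strict monotonicity and then appeals to continuous dependence of ODE solutions on parameters for both continuity of $T_M$ and the limit $T_M\to\infty$ as $M\to\mu^+$, you instead write down the closed-form integral $M\,T_M=\int_a^\beta M/(f(y)+M)\,dy$ and work entirely with real-analysis tools: pointwise monotonicity of the integrand for the decreasing property, differentiation under the integral sign for continuity, dominated convergence for $M\to\infty$, and a Taylor estimate at the interior minimizer (giving a nonintegrable $1/(y-y^\ast)^2$ lower bound) together with monotone convergence for $M\to\mu^+$. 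Your approach is self-contained and makes the dependence on $M$ completely explicit, at the cost of needing the $C^2$ hypothesis in an essential way for the Taylor step; the paper's argument is softer, relying only on the qualitative fact that $y_\mu$ is a rest point of $\dot y=f(y)+\mu$ that the trajectory must approach, and would go through under weaker regularity on $f$.
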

	
	\begin{proof}
	Take two different values $\mu < M_1 < M_2$ and compare the respectively associated bang-bang functions $B_{M_1}$ and $B_{M_2}$. Set $M=M_2$ in the optimization problem (Problem \ref{prob:optimal_control_scalar_half_basin_monotone}), so that by Lemma \ref{lemma:scalar_optimal} $B_{M_2}$ is an optimal control and $B_{M_1}$ is a strictly suboptimal control. This yields the strict decreasing order $\int_{-\infty}^{\infty} B_{M_1}(t) dt > \int_{-\infty}^{\infty} B_{M_2}(t) dt$.
	
	Continuity of $M\cdot T_M$ follows if the switching time $T_M$ when $y$ arrives at the terminal state $\beta$ is continuous in $M$. This follows from a well known property of globally continuous dependence on parameters for solutions to initial value problems with a globally Lipschitz vector field. Here, $\dot{y} = f(y)+M$ is locally Lipschitz on $\mathbb{R}$ thus globally Lipschitz on the compact set of interest $y \in [a, \beta]$.

	For the first limit, recall $T_M$ is defined so that $\beta - a = \int_{0}^{T_M} f(y(s)) + M ~ds$. Since $-\mu \leq f(y(s)) \leq 0$ we have 
	\begin{align*}
%		& \int\limits_{0}^{T_M} M-\mu~ds \leq \beta - a \leq  \int\limits_{0}^{T_M} M ~ds\\
	& T_M(M-\mu) \leq \beta - a \leq  MT_M\\
		\implies& -\mu T_M \leq (\beta - a) - MT_M \leq  0
	\end{align*}
	and taking the limit as $M\to\infty$ gives $T_M\to0$ and $(\beta - a) - MT_M \to 0$.

	For the second limit, it suffices to show that $\lim\limits_{M\to \mu^+} T_M = \infty$. Let $$y_\mu = \min\{y \in [a, \beta] ~:~ f(y) = -\mu\}$$ be the first point at which $f(y)$ achieves its minimum value in $[a, \beta]$. Then $y_\mu$ is a rest point of the ODE $\dot{y} = f(y) + \mu$ such that the solution to the initial value problem $y(0) = a$ approaches $y_\mu$ as $t\to\infty$. By globally continuous dependence of solutions on parameters, we may choose $M \approx \mu$ such that the solution of $\dot{y} = f(y) + M, ~y(0) = a$ takes an arbitrarily large time to reach $y_\mu$. 
\end{proof}

The final ingredient is the next lemma, which is modeled on the piecewise linear ramping example (Example \ref{ex:piecewise}) from the beginning of this chapter, except that it replaces the example base vector field with our arbitrary one $\dot{x} = f(x)$. Exactly the same tipping behavior still occurs, though of course the value of the critical slope $m_c$ depends on the choice of $f$. 

\begin{lemma}
Fixing an $f$ as before with attracting rest point at $a$ and basin boundary $\beta$, and a constant $\lambda_{\infty} > \beta-a$, consider the following parameterized family of piecewise linear ramp functions with parameter $m>0$:
\[
\lambda(mt) = \begin{cases}
0 & \text{if } t < 0\\
mt & \text{if } 0 \leq t \leq \lambda_{\infty}/m\\
0 & \text{if } t > \lambda_{\infty}/m
\end{cases}.
\]

There exists a unique solution $\hat{x}(t)$ of the ODE $\dot{x} = f(x+\lambda(mt))$ such that $\lim \limits_{t\to -\infty} \hat{x}(t) = a$, and there exists a unique critical parameter value $m=m_c$ such that

\[\begin{cases}
\lim\limits_{t\to \infty}\hat{x}(t) = a-\lambda_{\infty} & \text{for } m < m_c\\
\lim\limits_{t\to \infty}\hat{x}(t) = \beta-\lambda_{\infty} & \text{for } m=m_c\\
\hat{x}(t) \text{ escapes } \overline{D}-\lambda_{\infty} & \text{for } m > m_c
\end{cases}.
\]

Furthermore, when considered as a function of $\lambda_{\infty}$, $m_c$ is continuous and strictly decreasing and $\lim_{\lambda_{\infty}\to\beta-a} m_c = \infty$, $\lim_{\lambda_{\infty}\to\infty} m_c = \mu$ where $-\mu$ is the minimum value of $f$ on $[a, \beta]$.
\label{lemma:critical}
\end{lemma}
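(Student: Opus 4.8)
The plan is to transfer the whole statement to the co-moving frame and reduce it to the bang--bang material already developed. First I would record that the ramp $\lambda(mt)$ is globally Lipschitz (constant $m$), has the required asymptotics $\lambda\to 0$ at $-\infty$ and $\lambda\to\lambda_{\infty}$ at $+\infty$, and is eventually constant, so that $u:=\dot\lambda$ vanishes identically for $t>\lambda_{\infty}/m$ and is there trivially $C^1$ with exponential decay; hence Propositions~\ref{prop:unique_soln}, \ref{prop:3_forward_time_behaviors} and \ref{prop:unique_soln_three_forward_behaviors_untransformed_x} apply directly. This gives the unique $\hat x$ with $\lim_{t\to-\infty}\hat x(t)=a$ --- which in fact equals $a$ on $(-\infty,0]$ by autonomy and uniqueness --- and reduces the trichotomy for $\hat x$ (convergence to $a-\lambda_{\infty}$, convergence to $\beta-\lambda_{\infty}$, or escape from $\overline D-\lambda_{\infty}$) to the corresponding trichotomy for the co-moving solution $\hat y=\hat x+\lambda(mt)$ (convergence to $a$, convergence to $\beta$, or escape from $\overline D$).

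Next I would determine which of the three occurs as a function of $m$. In co-moving coordinates $\hat y(0)=a$; on $[0,\lambda_{\infty}/m]$ the solution obeys $\dot{\hat y}=f(\hat y)+m$, and for $t>\lambda_{\infty}/m$ it follows the autonomous flow $\dot{\hat y}=f(\hat y)$ from $\hat y(\lambda_{\infty}/m)$. Writing $z$ for the solution of $\dot z=f(z)+m,\ z(0)=a$, one has $\hat y(\lambda_{\infty}/m)=z(\lambda_{\infty}/m)$, and since $\overline D=(-\infty,\beta]$ with $\beta$ a rest point, $(\beta,\infty)$ is forward-invariant; so the forward fate of $\hat y$ is decided by whether $z(\lambda_{\infty}/m)$ is $<\beta$, $=\beta$, or $>\beta$. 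I would then split on $m$. For $0<m\le\mu$, the first rest point of $\dot z=f(z)+m$ to the right of $a$ is $z_m:=\min\{y\in[a,\beta]:f(y)=-m\}$, which satisfies $z_m\le y_\mu<\beta$ (with $y_\mu$ the first minimizer of $f$ on $[a,\beta]$), so by uniqueness $z(t)<z_m<\beta$ for all $t\ge0$ and tracking always occurs. For $m>\mu$, $f+m>0$ on $[a,\beta]$, so $z$ is strictly increasing there, reaches $\beta$ at the unique finite time $T_m$ --- the switching time of the bang--bang control $B_m$ --- and crosses strictly since $\dot z=m>0$ at $\beta$; hence $z(\lambda_{\infty}/m)$ is $<\beta$, $=\beta$, or $>\beta$ according as $\lambda_{\infty}/m$ is $>T_m$, $=T_m$, or $<T_m$, equivalently as $mT_m$ is $<\lambda_{\infty}$, $=\lambda_{\infty}$, or $>\lambda_{\infty}$.

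To finish, I would invoke Lemma~\ref{lemma:decreasing}: the map $g(m):=mT_m$ is continuous and strictly decreasing on $(\mu,\infty)$, with $g(m)\to\infty$ as $m\to\mu^+$ and $g(m)\to\beta-a$ as $m\to\infty$, hence a continuous strictly decreasing bijection of $(\mu,\infty)$ onto $(\beta-a,\infty)$. Since $\lambda_{\infty}>\beta-a$, this yields a unique $m_c\in(\mu,\infty)$ with $g(m_c)=\lambda_{\infty}$; the analysis above then gives tracking for $\mu<m<m_c$ (where $g(m)>\lambda_{\infty}$) and for $0<m\le\mu$, criticality at $m=m_c$, and escape for $m>m_c$ (where $g(m)<\lambda_{\infty}$) --- precisely the displayed trichotomy. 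And $m_c=g^{-1}$ inherits continuity, strict monotone decrease, and the bijection property onto $(\mu,\infty)$, so $m_c\to\infty$ as $\lambda_{\infty}\to(\beta-a)^+$ and $m_c\to\mu$ as $\lambda_{\infty}\to\infty$.

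The bulk of this is bookkeeping on top of the cited propositions and Lemma~\ref{lemma:decreasing}; the step I expect to require the most care is the case analysis in the second paragraph. Two points in particular: confirming that in the escape regime the crossing of $\beta$ during the pulse is irreversible --- which needs forward-invariance of $(\beta,\infty)$ not only under the autonomous flow but also under the pulse dynamics $\dot z=f(z)+m$, guaranteed by $\dot z=m>0$ at $\beta$ --- and handling the sub-threshold band $0<m\le\mu$, where $z$ only asymptotes to an interior equilibrium of $\dot z=f(z)+m$ and never reaches $\beta$, so that ``tracking for every $m<m_c$'' still holds even though the bang--bang construction and Lemma~\ref{lemma:decreasing} only directly describe $m>\mu$.
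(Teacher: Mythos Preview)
Your proposal is correct and follows essentially the same route as the paper: pass to co-moving coordinates, reduce the trichotomy to comparing the pulse width $\lambda_\infty/m$ with the switching time $T_m$ of the bang--bang control $B_m$, and invoke Lemma~\ref{lemma:decreasing} to obtain $m_c$ as the inverse of $m\mapsto mT_m$ evaluated at $\lambda_\infty$. One bookkeeping slip to fix: since $z$ is strictly increasing with $z(T_m)=\beta$, you have $z(\lambda_\infty/m)<\beta$ iff $\lambda_\infty/m<T_m$ (equivalently $mT_m>\lambda_\infty$), the reverse of the ``according as'' line you wrote --- your final paragraph already uses the correct direction, so this is just a local inversion to clean up.
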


\begin{proof}
	
Proposition \ref{prop:unique_soln_three_forward_behaviors_untransformed_x} established the existence of the unique solution $\hat{x}(t)$ and fact that the three forward behaviors mentioned are the only possible forward behaviors for $\hat{x}(t)$. Under the co-moving change of coordinates, the ramp $\lambda(mt)$ is transformed into a control function $u(mt) = \dot{\lambda}(mt)$ which is a bang-bang style step function where the step has height $m$ and width $\lambda_\infty/m$. First of all, if $m \leq \mu$ then $u$ certainly cannot induce tipping, thus the first option out of the three forward behavior occurs when $m \leq \mu$. Now assuming $m > \mu$, compare $u(mt)$ to the critical bang-bang function $B_m(t)$ \eqref{eq:bang_bang}, whose step also has height $m$ but has a possibly different width $T_m$. By definition of $T_m$, we see that $u(mt)$ induces tipping if and only if $T_m \leq \lambda_{\infty}/m$, with equality giving criticality. Rewriting slightly, $u(mt)$ induces tipping if and only if $m T_m \leq  \lambda_{\infty}$. 

By Lemma 	\ref{lemma:decreasing}, $mT_m$ is a continuous decreasing function of $m$ with range $(\beta -a, \infty)$; thus it intersects the constant $\lambda_{\infty} >\beta-a$ exactly once, which gives $m=m_c$ with the desired behaviors on either side of $m_c$, as well as the strict decreasing property, the continuity, and the limiting behaviors.
\begin{figure}[H]
	\begin{center}
		\begin{tikzpicture}
			\begin{axis}[axis lines=center,
				xlabel=$m$,
				xmin=-1,
				xmax=10,
				ymin=-1,
				ymax=10,
				xtick={1.5, 3},
				ytick={1, 2.3333},
				yscale=0.8,
				hide obscured x ticks=false,
				hide obscured y ticks=false,
				major tick length=5,
				xticklabels={$\mu$, $m_c$},
				yticklabels={$\beta -a$, $\lambda_{\infty}$},
				xticklabel style={font=\small, xshift=-1.5ex, color=black},
				every tick/.style={black, thick},
				]
  				\addplot[black, mark=*,,mark options={yscale=1.2}] coordinates{(3, 2.3333)};
				\addplot[smooth, thick, black, samples=100, domain=1.5:10] {1+2/(x-1.5))};
				\addplot[smooth, thick, black, dotted, domain=0:10] {1};
				\addplot[smooth, thick, black, domain=0:10] {2.3333};
   				\addplot[dotted, thick] coordinates {(1.5, 0) (1.5, 10)};
   				\addplot[dotted, thick] coordinates {(3, 0) (3, 2.333)};
				\node[] at (3,9.5) {$m\cdot T_m$};
			\end{axis}
		\end{tikzpicture}
		
	\end{center}
	\caption{The continuous curve $m T_m$ is strictly decreasing in $m$ and approaches $\infty, \beta-a$ in the limits as $m \to \mu, \infty$, respectively. The curve crosses the constant $\lambda_\infty > \beta -a$ exactly once, giving the desired critical slope $m_c$. As $\lambda_{\infty}\to\beta-a$ we have $m_c \to \infty$; as $\lambda_{\infty}\to\infty$ we have $m_c \to \mu$.}
\end{figure}
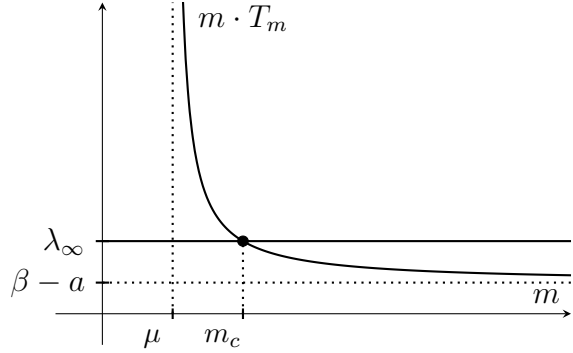
\end{proof}

\begin{theorem}[Scalar, Monotone Ramp, One-Sided Basin Version]
	Assume that $f: \mathbb{R} \to \mathbb{R}$ is $C^2$ and the ODE $\dot{x} = f(x)$ has an attracting rest point at $x=a$ whose basin of attraction $D$ is the half-infinite interval $D = (-\infty, \beta)$ where $\beta > a$ is a finite number, and $x=\beta$ is a hyperbolic unstable rest point of $\dot{x} = f(x)$. Fix a finite constant $\lambda_{\infty}> \beta-a$, and assume $\lambda:\mathbb{R}\to \mathbb{R}$ is globally Lipschitz continuous and monotone non-decreasing with $\lim\limits_{t\to-\infty} \lambda(t) = 0$, $\lim\limits_{t\to\infty} \lambda(t) = \lambda_{\infty}$. Assume there exists a $T$ such that $\lambda$ is $C^2$ when restricted to $(T, \infty)$, and there exists a number $\rho$ such that $\lim\limits_{t\to\infty} \frac{\ddot{\lambda}(t)}{e^{-\rho t}}$ exists. Let $-\mu<0$ equal the minimum value of $f$ on $[a, \beta]$. Then there exists a number $m_c>\mu$ such that if $\lambda(t)$ induces tipping in the ODE $\dot{x} = f(x+\lambda(t))$ then $\dot{\lambda}(t) \geq m_c$ at least once. Moreover, there exists a choice of $\lambda(t)$ satisfying the given conditions which does induce tipping with $\max_t \dot{\lambda}(t) = m_c$. Finally, $m_c$ is continuous and strictly decreasing when viewed as a function of $\lambda_\infty$ and satisfies $\lim_{\lambda_{\infty}\to\beta-a} m_c = \infty$, $\lim_{\lambda_{\infty}\to\infty} m_c = \mu$.
	\label{thm:special_scalar}
\end{theorem}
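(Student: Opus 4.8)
The strategy is to reduce the theorem to the three preceding lemmas, taking $m_c$ to be precisely the quantity furnished by Lemma~\ref{lemma:critical}, i.e.\ the unique solution of $m\,T_m=\lambda_\infty$; note $m_c>\mu$ since $M\,T_M\to\infty$ as $M\to\mu^+$ (Lemma~\ref{lemma:decreasing}) while $\lambda_\infty$ is finite. Throughout I would work in the co-moving coordinate $y=x+\lambda$, so that by the definition of tipping for $\lambda$ and Proposition~\ref{prop:unique_soln_three_forward_behaviors_untransformed_x}, $\lambda$ induces tipping in $\dot x=f(x+\lambda(t))$ exactly when $u=\dot\lambda$ induces tipping in $\dot y=f(y)+u(t)$, with the three forward behaviors of $\hat x$ corresponding to those of the solution $\hat y$ from Proposition~\ref{prop:unique_soln}. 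The theorem's standing hypotheses on $\lambda$ (global Lipschitz continuity, $C^2$ smoothness on some $(T,\infty)$, exponential decay of $\ddot\lambda$) are exactly those required to invoke Propositions~\ref{prop:unique_soln}, \ref{prop:3_forward_time_behaviors} and \ref{prop:unique_soln_three_forward_behaviors_untransformed_x}.

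For the lower bound, suppose $\lambda$ induces tipping and set $M:=\esssup_t\dot\lambda(t)$; by monotonicity $\dot\lambda\ge0$ a.e.\ so $M=\|\dot\lambda\|_\infty\ge0$, and the goal is $M\ge m_c$. First I would dispose of the degenerate case $M\le\mu$: a standard scalar comparison argument applied to $\dot{\hat y}=f(\hat y)+\dot\lambda\le f(\hat y)+M$ shows that the autonomous field $z\mapsto f(z)+M$ with $M\le\mu$ has a zero in $[a,\beta)$ (at or before the first minimizer of $f$ on $[a,\beta]$) past which upward trajectories cannot climb, so $\hat y$ — which limits to $a$ in backward time — stays strictly below $\beta$ for all forward time, contradicting tipping. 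Hence $M>\mu$, and Problem~\ref{prob:optimal_control_scalar_half_basin_monotone} is well posed for this $M$. By Propositions~\ref{prop:3_forward_time_behaviors} and \ref{prop:unique_soln_three_forward_behaviors_untransformed_x}, tipping rules out $\lim_{t\to\infty}\hat y=a$, leaving two cases: either $\hat y$ is critical, $\lim_{t\to\infty}\hat y(t)=\beta$, and then $(\hat y,\dot\lambda)$ is itself an admissible pair for Problem~\ref{prob:optimal_control_scalar_half_basin_monotone}; or $\hat y$ escapes $\overline D$, and then continuity together with $\hat y\to a<\beta$ as $t\to-\infty$ yields a finite first-passage time $t^\ast$ with $\hat y(t^\ast)=\beta$, so that replacing the control by $\tilde u:=\dot\lambda$ on $(-\infty,t^\ast)$ and $\tilde u:=0$ on $[t^\ast,\infty)$ produces a trajectory $\tilde y$ that agrees with $\hat y$ up to $t^\ast$ and then rests at $\beta$ (since $f(\beta)=0$), making $(\tilde y,\tilde u)$ admissible. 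In either case the admissible control has cost at most $\int_{-\infty}^{\infty}\dot\lambda\,dt=\lambda_\infty$, so Lemma~\ref{lemma:scalar_optimal} gives $M\,T_M\le\lambda_\infty=m_c\,T_{m_c}$, and the strict monotonicity of $N\mapsto N\,T_N$ (Lemma~\ref{lemma:decreasing}) forces $M\ge m_c$.

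For tightness I would exhibit the critical piecewise-linear ramp of Lemma~\ref{lemma:critical} with slope parameter $m=m_c$: it is globally Lipschitz with constant $m_c$, monotone non-decreasing, has the prescribed limits $0$ and $\lambda_\infty$, and is constant on $(\lambda_\infty/m_c,\infty)$ — hence $C^2$ there with $\ddot\lambda\equiv0$, so the exponential-decay condition holds trivially for any $\rho$ — so it satisfies every hypothesis. By Lemma~\ref{lemma:critical} with $m=m_c$ it is critical, hence in particular induces tipping, while $\dot\lambda$ equals $m_c$ on $(0,\lambda_\infty/m_c)$ and $0$ off the closure of that interval, so its maximal slope is exactly $m_c$. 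Finally, the continuity and strict decrease of $m_c$ as a function of $\lambda_\infty$, together with $\lim_{\lambda_\infty\to\beta-a}m_c=\infty$ and $\lim_{\lambda_\infty\to\infty}m_c=\mu$, are precisely the last assertion of Lemma~\ref{lemma:critical}, so nothing further is needed there.

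The main obstacle is the bookkeeping in the lower bound rather than any new idea: one must check carefully that the truncated pair $(\tilde y,\tilde u)$ in the escape case is genuinely admissible (absolute continuity of $\tilde y$, the correct forward limit $\beta$, and $\tilde u\in[0,M]$ a.e.), and one must make the comparison argument that disposes of $M\le\mu$ fully rigorous for the Carath\'eodory solution $\hat y$ on the relevant forward interval. I would also flag one interpretive point: since $\dot\lambda$ is only defined almost everywhere, ``$\dot\lambda(t)\ge m_c$ at least once'' should be read as $\esssup_t\dot\lambda(t)\ge m_c$, and in the extremal piecewise-linear ramp the value $m_c$ is in fact attained on an entire interval, which is why writing $\max_t\dot\lambda(t)=m_c$ there is harmless.
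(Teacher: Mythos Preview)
Your proposal is correct and follows essentially the same route as the paper: take $m_c$ from Lemma~\ref{lemma:critical}, truncate the tipping control at the first arrival at $\beta$ to produce an admissible pair for Problem~\ref{prob:optimal_control_scalar_half_basin_monotone}, and then compare costs via Lemmas~\ref{lemma:scalar_optimal} and~\ref{lemma:decreasing}. The only notable differences are cosmetic: the paper argues by contradiction and splits into two cases (whether $\sup_t\dot\lambda$ is attained) in order to obtain the literal ``$\dot\lambda(t)\ge m_c$ at least once'' reading rather than your $\esssup$ reformulation, and the paper does not separately spell out the degenerate case $M\le\mu$ that you handle by a comparison argument.
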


\begin{proof}
Choose $m_c$ as defined in Lemma \ref{lemma:critical}. The critical piecewise linear ramp function that it corresponds to in Lemma \ref{lemma:critical} induces tipping with maximum slope $m_c$. To show that $\dot{\lambda}(t) \geq m_c$ at least once, a slightly different argument is used depending on whether $\dot{\lambda}(t)$ attains its supremum or not. 

\begin{enumerate}
	\item[Case 1.]  $\dot{\lambda}(t)$ attains its supremum. 
	
	Suppose for contradiction that $\lambda(t)$ induces tipping but $\max_t \dot{\lambda}(t) = N < m_c$. Let $u = \dot{\lambda}$ and assume $u$ is critical -- otherwise, truncate it (and set equal to 0) on the right end while decreasing its integral; thus, $\int_{-\infty}^{\infty} u(t) ~dt \leq \lambda_{\infty}$. 

Note $u$ is now an admissible control for the optimization problem (Problem \ref{prob:optimal_control_scalar_half_basin_monotone}) when the control constraint is $u \in [0,N]$. So by optimality of the bang-bang function $B_N$ in Lemma \ref{lemma:scalar_optimal}, its integral satisfies
\begin{align*}
\int\limits_{-\infty}^{\infty} B_N(t) ~dt &\leq \int\limits_{-\infty}^{\infty} u(t) ~dt\\
&  \leq \lambda_{\infty}
\end{align*}
But also \begin{align*}
	\int\limits_{-\infty}^{\infty} B_N(t) ~dt &> \int\limits_{-\infty}^{\infty} B_{m_c}(t) ~dt \text{ by Lemma \ref{lemma:decreasing}}\\
	&  = \lambda_{\infty} \text{ by definition}
\end{align*}
so we have reached a contradiction.

\item[Case 2.] $\dot{\lambda}(t)$ does not attain its supremum. 

Suppose for contradiction that $\lambda(t)$ induces tipping but $\sup_t \dot{\lambda}(t) \leq m_c$. Let $u = \dot{\lambda}$ and assume $u$ is critical -- otherwise, truncate it (and set equal to 0) on the right end while decreasing its integral; thus, $\int_{-\infty}^{\infty} u(t) ~dt \leq \lambda_{\infty}$. 

Note $u$ is now an admissible, but strictly suboptimal, control for the optimization problem (Problem \ref{prob:optimal_control_scalar_half_basin_monotone}) when the control constraint is $u \in [0,m_c]$. So by optimality of the bang-bang function $B_{m_c}$ from Lemma \ref{lemma:scalar_optimal}, its integral satisfies
\begin{align*}
	\int\limits_{-\infty}^{\infty} B_{m_c}(t) ~dt &< \int\limits_{-\infty}^{\infty} u(t) ~dt\\
	&  \leq \lambda_{\infty}
\end{align*}
But also \begin{align*}
	\int\limits_{-\infty}^{\infty} B_{m_c}(t) ~dt  = \lambda_{\infty} \text{ by definition}
\end{align*}
so we have reached a contradiction.
\end{enumerate}
\end{proof}

\section{General Scalar Version}

At this point, we discuss how to expand from the above special case to a general scalar version where assumptions of the monotonicity of the ramp and the one-sidedness of the basin are removed. In summary,
\begin{itemize}
	\item Allowing a two-sided boundary for the basin of attraction. An optimal escape trajectory would traverse through only one side of the basin; it would not cross back over the attractor. 
	\item Removing monotonicity. This requires a notable revision to our interpretation of the amplitude of the perturbation. By amplitude we no longer mean the forward limiting constant $\lambda_\infty$ but instead $$L = \int\limits_{-\infty}^{\infty} |\dot{\lambda}(t)| ~dt,$$ the total \textbf{arclength} of the perturbation, which is at least as large as $\lambda_\infty$, with equality if $\lambda$ is monotone. 
\end{itemize}

\begin{remark}
	Since a non-monotone $\lambda$ may no longer bear a resemblance to a ramp, we refer to it as an \textbf{external forcing function} rather than a ramp function.
\end{remark}

\begin{theorem}[General Scalar Version]
	Assume that $f: \mathbb{R} \to \mathbb{R}$ is $C^2$ and the ODE $\dot{x} = f(x)$ has an attracting rest point at $x=a$ whose basin of attraction $D$ has a boundary consisting of either one or two hyperbolic unstable rest points. Write $D=(\alpha, \beta)$, where $-\infty \leq \alpha < a < \beta \leq \infty$ and at least one of $\alpha, \beta$ is finite. Let $R$ be the radius of the basin, that is $R = \min\{a-\alpha,\beta-a\}$. Define
		\begin{align*}
			\mu_-&=\begin{cases}
			\max\{f(x) ~|~ x \in [\alpha, a]\} \text{ if } \alpha \neq -
			\infty\\
			\infty \text{ otherwise} 
		\end{cases}\\
		\mu_+&=\begin{cases}
			-\min\{f(x) ~|~ x \in [a,\beta]\} \text{ if } \beta \neq 
			\infty\\
			\infty \text{ otherwise} 
		\end{cases}
	\end{align*} and let $\mu = \min\{\mu_-, \mu_+\}$. Thus $\mu$ is the minimum of the maximums of the magnitude $|f|$ of the vector field over each escapable side of the basin. Fix a constant $L> R$, and assume the external forcing function $\lambda:\mathbb{R}\to \mathbb{R}$ is globally Lipschitz continuous with $\lim\limits_{t\to-\infty} \lambda(t) = 0$, $\lim_{t\to\infty}\lambda(t)$ finite, and $\int\limits_{-\infty}^{\infty} |\dot{\lambda}(t)| ~dt = L$. Assume there exists a $T$ such that $\lambda$ is $C^2$ when restricted to $(T, \infty)$, and there exists a number $\rho$ such that $\lim\limits_{t\to\infty} \frac{\ddot{\lambda}(t)}{e^{-\rho t}}$ exists. Then there exists a number $m_c>\mu$ such that if $\lambda(t)$ induces tipping in the ODE $\dot{x} = f(x+\lambda(t))$ then $|\dot{\lambda}(t)| \geq m_c$ at least once.  Moreover, there exists a choice of $\lambda(t)$ satisfying the given conditions which does induce tipping with $\max_t \dot{\lambda}(t) = m_c$. Finally, $m_c$ is continuous and strictly decreasing when viewed as a function of $L$ and satisfies $\lim_{L\to R} m_c = \infty$, $\lim_{L\to\infty} m_c = \mu$.
	\label{thm:gen_scalar}
\end{theorem}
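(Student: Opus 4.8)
The plan is to reduce the two-sided, non-monotone problem to two independent copies of the one-sided monotone machinery of Lemmas \ref{lemma:scalar_optimal}--\ref{lemma:critical}, one per escapable side of the basin, and then take the cheaper of the two. First I would set up the one-sided objects. Assuming $\beta<\infty$: for $M>\mu_+$ let $T_M^+$ be the first time the solution of $\dot y=f(y)+M$, $y(0)=a$, reaches $\beta$, and $B_M^+$ the height-$M$ bang-bang control on $[0,T_M^+]$; the argument of Lemma \ref{lemma:decreasing} applies verbatim (with $\beta-a$ playing the role of the basin width and $\mu_+$ that of $\mu$) to show $M\mapsto MT_M^+$ is continuous and strictly decreasing from $(\mu_+,\infty)$ onto $(\beta-a,\infty)$. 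Symmetrically, if $\alpha>-\infty$, define $T_M^-$ and $B_M^-$ (height $-M$) with $M\mapsto MT_M^-$ continuous strictly decreasing from $(\mu_-,\infty)$ onto $(a-\alpha,\infty)$. Then set $m_c^\pm(L)$ to be the unique solution of $m_c^\pm\,T^\pm_{m_c^\pm}=L$ when $L$ exceeds $\beta-a$ (resp.\ $a-\alpha$), and $+\infty$ otherwise, and put $m_c=\min\{m_c^+,m_c^-\}$. Since $L>R$ at least one radius lies below $L$, so $m_c<\infty$; and $m_c>\mu$ since each finite $m_c^\pm$ exceeds $\mu_\pm\ge\mu$. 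Infinite endpoints are accommodated by the conventions $\mu_\pm=\infty$, $m_c^\pm=\infty$, with $L>R$ and "at least one endpoint finite" keeping everything meaningful.

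The heart of the matter is the estimate for an arbitrary tipping forcing $\lambda$. Write $u=\dot\lambda$, let $\hat y$ be the co-moving solution from $a$, $L=\int_{-\infty}^\infty|u|$, $N=\esssup_t|u(t)|$. By Proposition \ref{prop:unique_soln_three_forward_behaviors_untransformed_x} (equivalently Proposition \ref{prop:3_forward_time_behaviors} in the co-moving frame), since $\lambda$ tips, $\hat y$ either limits to $\partial D$ or escapes $\overline D$; in both cases it is eventually near, or beyond, exactly one endpoint, say $\beta$ (the left-exit case being symmetric). A comparison argument first gives $N>\mu_+$: if $N\le\mu_+$ then $\dot{\hat y}\le f(\hat y)+\mu_+$ a.e., and since $f+\mu_+$ vanishes at the leftmost minimizer $y_{\mu_+}\in(a,\beta)$ of $f$ on $[a,\beta]$ and $\hat y$ starts below $y_{\mu_+}$, comparison with $\dot z=f(z)+\mu_+$ (which cannot cross its rest point $y_{\mu_+}$) confines $\hat y$ below $y_{\mu_+}<\beta$ for all time, contradicting a right exit. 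Next I isolate the final passage across the basin: let $t_1$ be the first time $\hat y=\beta$ (replace $\beta$ by $\beta'\uparrow\beta$ in the limiting case) and $t_0$ the last time $\le t_1$ with $\hat y\le a$ (or $-\infty$), so $a<\hat y<\beta$ on $(t_0,t_1)$. Let $E\subseteq(t_0,t_1)$ be the times at which $\hat y$ attains its running maximum with $\dot{\hat y}>0$; there $\hat y$ is strictly increasing, sweeps $(a,\beta)$ up to a null set, and $u=\dot{\hat y}-f(\hat y)>0$. Changing variables $t\mapsto y=\hat y(t)$ on compact subintervals exactly as in the proof of Lemma \ref{lemma:scalar_optimal} and passing to the limit,
\begin{align*}
L\;\ge\;\int_{t_0}^{t_1}|u|\,dt\;\ge\;\int_E u\,dt
&=(\beta-a)-\int_a^\beta\frac{f(y)}{f(y)+u(t(y))}\,dy\\
&\ge(\beta-a)-\int_a^\beta\frac{f(y)}{f(y)+N}\,dy\;=\;N\,T_N^+,
\end{align*}
where the middle inequality uses $0<f(y)+u\le f(y)+N$ and $f(y)<0$ on $(a,\beta)$, and the last equality is the computation of $\int B_N^+$.

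It remains to close the loop and record the auxiliary claims. The bound $L\ge NT_N^+$ together with the fact that $M\mapsto MT_M^+$ takes values in the open interval $(\beta-a,\infty)$ forces $L>\beta-a$, so $m_c^+(L)$ is defined; and strict monotonicity of $M\mapsto MT_M^+$ with $m_c^+(L)\,T^+_{m_c^+(L)}=L\ge NT_N^+$ yields $N\ge m_c^+(L)\ge m_c$ — and here no case split on attainment of the supremum is needed, in contrast to Theorem \ref{thm:special_scalar}, because the arclength constraint $\int|u|=L$ holds with equality. The left-exit case gives $N\ge m_c^-(L)\ge m_c$ symmetrically, so $\esssup|\dot\lambda|\ge m_c$, which is the asserted "$|\dot\lambda|\ge m_c$ at least once". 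For sharpness, the running integral of $B^+_{m_c^+}$ or $B^-_{m_c^-}$ — whichever attains the minimum defining $m_c$ — is a monotone, globally Lipschitz, eventually-constant forcing of arclength exactly $m_c\,T_{m_c}=L$ that is critical, hence tips, with maximal speed exactly $m_c$. Finally, as a function of $L$, $m_c$ is the lower envelope of $m_c^+$ and $m_c^-$, each of which (where finite) is the continuous strictly decreasing inverse of $M\mapsto MT_M^\pm$, tending to $\infty$ at the left endpoint of its domain and to $\mu_\pm$ as $L\to\infty$; a lower envelope of continuous strictly decreasing functions is continuous and strictly decreasing, it tends to $\infty$ as $L\to R^+$ because the side of radius $R$ dominates there, and to $\mu$ as $L\to\infty$.

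I expect the main obstacle to be the running-maximum reparametrization of the second paragraph: passing from a genuinely non-monotone escape trajectory to the monotone path traced on $E$ without inflating the arclength, and making the change of variables rigorous (bounded variation / absolute continuity of the running maximum on compact windows, the full-measure sweeping of $(a,\beta)$, and the null sets where $\dot{\hat y}=0$). The comparison argument for $N>\mu_+$ and the bookkeeping for infinite endpoints are secondary but also require care.
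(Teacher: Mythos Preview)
Your proposal is correct and follows essentially the same route as the paper's sketch: decompose into the two one-sided escape problems, use the bang-bang optimality computation of Lemma~\ref{lemma:scalar_optimal} on each side, and take the cheaper one, with $J(M)$ (the paper) and $\min\{m_c^+,m_c^-\}$ (yours) being two ways to package the same pointwise minimum. Your write-up is more explicit where the paper is terse --- in particular your running-maximum extraction makes precise what the paper hides behind ``one can show the optimal control is a positive or negative bang-bang function'' --- and your direct integral bound $L\ge N\,T_N^+$ lets you bypass the two-case split on attainment of the supremum that the paper inherits from Theorem~\ref{thm:special_scalar}.
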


\begin{proof}
We sketch the proof. First we require an updated optimization problem where we add magnitude bars around the control, and also update the second constraint in order to allow admissible trajectories to forward limit to either of the boundary points, if there are two:

Minimize the cost functional $$\int_{-\infty}^{\infty} |u(t)| dt$$ subject to the constraints 
\begin{itemize}
	\item $\lim\limits_{t\to -\infty}y(t) = a$,
	\item $\lim\limits_{t \to \infty}y(t) \in \partial D$,
	\item $|u(t)| \in [0,M]$ for almost every $t$.
\end{itemize}
There are up to two possible escape paths, and one can show the optimal control is a positive or negative bang-bang function applied to achieve escape over whichever side of the basin is "easier." Thus the optimal value $J(M)$ of the cost functional is a pointwise minimum of up to two continuous decreasing functions, which is itself continuous and decreasing. 

There are limits for the optimal cost that arise the same way as in Lemma \ref{lemma:decreasing} over each of the up to two paths. In particular, as $M\to\infty$ the optimal cost on each relevant path limits to the length of the path, so overall $\lim\limits_{M \to \infty} J(M)=R$, the minimum path length. On the other hand, the optimal cost on each relevant path limits to infinity as $M$ approaches the maximum magnitude $\mu_P$ of $|f|$ on that path, so $\lim\limits_{M \to \min \mu_P} J(M) = \lim\limits_{M \to \mu} J(M) = \infty$. 

Now every $L>\beta-R$ intersects the curve $J(M)$ exactly once, giving $m_c$. Notice that if a threshold speed were defined similarly to that in Lemma \ref{lemma:decreasing}, but possibly one on each side of the basin, then $m_c$ here would be their minimum. The rest of the argument proceeds with exactly the same logic as in the proof of Theorem \ref{thm:special_scalar}, replacing $\lambda_\infty$ with $L$,  replacing $B_N$ and $B_{m_c}$ with the new optimal controls for these problems respectively, and inserting magnitude bars where appropriate. 
\end{proof}

	\newpage
	\bibliography{refs}

\end{document}